\title{Matched pairs of Courant algebroids}
\author{Melchior Gr{\"u}tzmann}
\address{Department of Mathematics, Sun Yat-sen University \newline 
Guangzhou 510275, People's Republic of China}
\curraddr{Department of Mathematics, Northwestern Polytechnical University \newline 
Xi'an 720071, People's Republic of China}
\email{\href{mailto:melchiorg@gmail.com}{melchiorg@gmail.com}}
\author{Mathieu Sti{\'e}non}
\address{Department of Mathematics, Penn State University \newline
University Park, PA 16802, United States of America}
\email{\href{mailto:stienon@math.psu.edu}{stienon@math.psu.edu}}
\thanks{Research partially supported by NSA grant H98230-12-1-0234 and NSF grant DMS0605725 (US-China collaboration supplement).}
\newcommand\defbb[2]{\def#1{{\mathbb{#2}}}}
\newcommand\defbf[2]{\def#1{{\boldsymbol{#2}}}}
\newcommand\defcal[2]{\def#1{{\mathcal{#2}}}}
\newcommand\deffrak[2]{\def#1{{\mathfrak{#2}}}}
\newcommand\defrm[2]{\def#1{{\mathrm{#2}}}}
\def\<{\langle}
\def\>{\rangle}
\def\:{\colon}
\def\a{\alpha}
\def\b{\beta}
\def\c{\gamma}
\def\conn_#1{\nabla_{\! #1}\,}  % there is a spacing problem with the nabla symbol
\defbb\C{C}
\def\smooth{{C^\infty}}
\defrm\ud{d}
\defcal\D{D}
\def\dia{\diamond}
\def\ConnectionDer_#1{\Dot\nabla_{\!\!#1}\,}
\defcal\E{E}
\deffrak\g{g}
\DeclareMathOperator\Graph{Graph}
\def\ins{i}  %% insertion of a vector into a form
\defcal\L{L}
\def\lconn_#1{\nablaleft_{\!\!#1}\,}
\deffrak\o{o}
\def\oo{\Omega}
\defbb\R{R}
\def\rconn_#1{\nablaright_{\!\!#1}\,}
\def\w{\mho}
\defbf\X{X}
\deffrak{\XX}{X} % vector fields
\begin{document}

\begin{abstract}  We introduce the notion of matched pairs
 of Courant algebroids and give several examples
 arising naturally
 from complex manifolds, holomorphic Courant algebroids, and
certain regular Courant algebroids. 
We consider the matched sum of two Dirac subbundles, one in each of
 two Courant algebroids forming a matched pair.
\end{abstract}

\maketitle

\section{Introduction}

Matched pairs of algebraic structures occur naturally in several contexts of mathematics.
For instance, a matched pair of groupoids, introduced by Mackenzie in~\cite{Mack92} 
while studying double (Lie) groupoids, are two groupoids $G\rightrightarrows M$ 
and $H\rightrightarrows M$ over the same base $M$ together with a representation of $G$ on $H$ 
and a representation of $H$ on $G$ compatible such that their product $G\bowtie H$ is again a groupoid.
The infinitesimal version is a matched pair of Lie algebroids,
which were introduced by Lu in~\cite{Lu97} and studied by Mokri in~\cite{Mor97}. 
They consist of two Lie algebroids $A_1$ and $A_2$ over the same base manifold $M$,
together with an $A_1$-module structure on $A_2$ and an $A_2$-module structure on $A_1$, 
such that their direct sum $A_1\bowtie A_2$ is again a Lie algebroid.
Further examples arise from the study of holomorphic Poisson structures 
and holomorphic Lie algebroids~\cite{LSX08}.

The main goal of this note is to study matched pairs of Courant algebroids.
More precisely, we investigate the question under which conditions the direct sum
of two Courant algebroids over the same base manifold is still a Courant algebroid.
We derive conditions which are similar to those of Lu and Mokri~\cite{Lu97,Mor97}.
However, there is a significant difference between matched pairs of
Courant algebroids and matched pairs of Lie algebroids. 
It turns out, unlike Lie algebroids, that each component of the direct sum Courant algebroid 
of a matched pair is no longer a Courant subalgebroid.

Examples of matched pairs of Courant algebroids have appeared in literature. 
In connection with the study of port-Hamiltonian systems, 
Merker considered the Courant algebroid $TM\oplus T^*M\oplus E\oplus E^*$, 
where $E\to M$ be a vector bundle endowed with a flat connection $\nabla$~\cite{Mer09}.
This is indeed a very simple example of matched pairs of Courant algebroids.

Another class of matched pairs of Courant algebroids arise 
when studying holomorphic Cou\-rant algebroids along a similar line 
as in the study of holomorphic Lie algebroids~\cite{LSX08}.
In particular, we prove that a holomorphic Courant algebroid 
over a complex manifold $X$ is equivalent to a matched pair 
of (smooth) Courant algebroids satisfying certain special properties,
one of which is the standard Courant algebroid $T_X^{0, 1}\oplus (T_X^{0, 1})^*$.

A third class of examples arise from the construction of regular Courant algebroids, 
which were recently classified by one of the authors in a joint work~\cite{CSX09}.

The paper is organized as follows. In Section~\ref{s:prelim} we review the notion of Courant algebroids 
and matched pairs of Lie algebroids. In Section~\ref{s:mp} we introduce the definition of matched pairs 
of Courant algebroids. In Section~\ref{s:ex} we give 
four classes of examples: Courant algebroids with flat connections, complex manifolds, holomorphic Courant algebroids, 
and flat regular Courant algebroids. In Section~\ref{s:mpDirac}, we give a definition 
of matched pairs of Dirac structures and show that a matched pair of Dirac structures is 
a matched pair of Lie algebroids. A supergeometric description of matched pairs of Courant algebroids 
will be discussed elsewhere.

We would like to thank Thomas Strobl and Ping Xu for enlightening discussions. 

\section{Preliminaries}\label{s:prelim}
We recall the definition of Courant algebroids based on the Dorfman bracket originally introduced in~\cite{Dor87,Dor93}. For a comparison to the bracket introduced by Courant~\cite{Xu97}, see~\cite{Royt99,Cour90}.

\begin{defn} 
A (real) \emph{Courant algebroid} is a real vector bundle $E\to M$ endowed with a symmetric non-degenerate $\RR$-bilinear form $\langle~,~\rangle$ on $E$ with values in $\RR$, an $\R$-bilinear product $\dia$ on the space of sections $\sections{E}$ called Dorfman bracket and a bundle map $\rho:E\to TM$ (over the identity) called anchor map satisfying 
\begin{gather}
\phi\dia(\phi_1\dia\phi_2) = \phi_1\dia(\phi\dia\phi_2) +(\phi\dia\phi_1)\dia\phi_2\label{Jacobi}  \\
\phi\dia(f\phi') = \big(\rho(\phi)f\big)\phi' +f(\phi\dia\phi')  \label{Leibniz}  \\
\phi\dia\phi = \thalf\D\<\phi,\phi\>  \label{nSkew}  \\
\rho(\phi)\<\phi',\phi'\> = 2\<\phi\dia\phi',\phi'\> , \label{adInvar}
\end{gather} 
where $\phi,\phi_1,\phi_2,\phi'\in\sections{E}$, $f\in\smooth(M)$ and 
$\D:\smooth(M)\to\sections{E}$ is defined by the relation 
$\<\D f,\phi\>=\rho^*(\ud f)\phi$.
\end{defn}

For any $\phi,\psi\in\sections{E}$, we have~\cite{Uchi02}
\beq{rhomor}\rho(\phi\dia\psi)=[\rho(\phi),\rho(\psi)] .\eeq 
Moreover $(\D f)\dia\phi=0$.

\begin{rmk}
Complex Courant algebroids are defined similarly except that the pairing is $\CC$-valued, 
the anchor is $TM\otimes\CC$-valued, and $\CC$-linearity replaces $\RR$-linearity. 
\end{rmk}

In one of his letters to Alan Weinstein, Pavol \v{S}evera described the following example: 
%\cite{SevLett}

\begin{ex}\label{twisted}
To each closed 3-form $H$ on $M$ is associated a Courant algebroid structure on $TM\oplus T^*M$ with 
inner product 
\beq{inner} \<X\oplus\alpha,Y\oplus\beta\> = \alpha(X)+\beta(Y) ,\eeq
anchor map
\beq{anchor} \rho(X\oplus\alpha) = X ,\eeq 
and Dorfman bracket
\beq{bracket} (X\oplus\a)\dia(Y\oplus\b) = [X,Y]\oplus\big( \L_X\b-\ins_Y\ud\a + \ins_{X\wedge Y} H\big) ,\eeq
where $X,Y\in\XX(M)$ and $\a,\b\in\OO^1 (M)$.
This Dorfman bracket is called standard if $H=0$ and $H$-twisted if $H\ne 0$. 
\end{ex}

\begin{defn}  Given an anchored vector bundle $E\xrightarrow{\rho}TM$ 
and a vector bundle $V$ over a smooth manifold $M$, an $E$-connection on $V$ 
is a bilinear operator $\nabla:\sections{E}\otimes\sections{V}\to\sections{V}$ fulfilling
\begin{gather}
  \conn_{f\psi}v = f\conn_\psi v , \label{dog} \\
  \conn_\psi(fv) = \big(\rho(\psi)f\big) v + f \conn_\psi v \label{cat} 
\end{gather}
for all $f\in\smooth(M)$, $\psi\in\sections{E}$, and $v\in\sections{V}$.
\end{defn}

\begin{defn}[\cite{Mor97}] Two Lie algebroids $A$ and $A'$ form a matched pair 
when a flat $A$-connection $\nablaright$ on $A'$ and a flat $A'$-connection $\nablaleft$ on $A$ 
satisfying 
\begin{gather}
  \lconn_\a[b,c] = [\lconn_\a b,c] +[b, \lconn_\a c] 
    +\lconn_{\rconn_c\a}b -\lconn_{\rconn_b\a}c \label{crocodile} \\
  \rconn_a[\b,\c] = [\rconn_a \b,\c] +[\b, \rconn_a \c] 
    +\rconn_{\lconn_\c a}\b -\rconn_{\lconn_\b a}\c \label{alligator}
\end{gather}
(for all $\a,\b,\c\in\sections{A'}$ and $a,b,c\in\sections{A}$) are specified. 
\end{defn}

The following theorem is due to  Mokri.

\begin{thm}[\cite{Mor97}]
Let $A$ and $A'$ be a pair of Lie algebroids (with anchors $\rho_A$ and $\rho_{A'}$, and Lie brackets $[,]_A$ and $[,]_{A'}$, resp.). 
If a pair of connections $\nablaleft$ and $\nablaright$ makes $(A,A')$ into
 a matched pair of Lie algebroids, then the vector bundle $A\oplus A'$ is a Lie algebroid when endowed with the anchor map $\rho_A + \rho_{A'}$ and the bracket 
\[ [a+\alpha,b+\beta]= ( [a,b]_A + \lconn_{\alpha} b - \lconn_{\beta} a )
+ ( [\alpha,\beta]_{A'} + \rconn_a \beta - \rconn_b \alpha ) .\] 
Conversely, given a Lie algebroid $L$ and two Lie subalgebroid $A$ and $A'$ such that $L=A\oplus A'$ as vector bundles, then $(A,A')$ is a matched pair of Lie algebroids whose pair of connections is determined by the following relation: 
\[ [a,\beta]=-\lconn_{\beta}a+\rconn_a\beta .\]
\end{thm}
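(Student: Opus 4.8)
The plan is to prove this classical matched-pair theorem for Lie algebroids in the usual two directions, checking axioms in a carefully chosen order. Throughout, write $\rho = \rho_A + \rho_{A'} \colon A\oplus A' \to TM$ for the candidate anchor and $\llbracket\,,\,\rrbracket$ for the candidate bracket on $\sections{A\oplus A'}$ displayed in the statement. The only thing to verify for the forward direction is that $(A\oplus A', \rho, \llbracket\,,\,\rrbracket)$ satisfies the Lie algebroid axioms: $\mathbb{R}$-bilinearity and antisymmetry of $\llbracket\,,\,\rrbracket$ (immediate from antisymmetry of $[,]_A$, $[,]_{A'}$ and the $\pm$ pattern of the connection terms), the Leibniz rule $\llbracket s, fs'\rrbracket = f\llbracket s,s'\rrbracket + (\rho(s)f)s'$, and the Jacobi identity.

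First I would dispatch the Leibniz rule: expand $\llbracket a+\alpha, f(b+\beta)\rrbracket$ using the defining formula, and collect the terms in which $\rho_A$ or $\rho_{A'}$ hits $f$. The Leibniz rule for $[,]_A$ contributes $(\rho_A(a)f)b$, the Leibniz rule for $[,]_{A'}$ contributes $(\rho_{A'}(\alpha)f)\beta$, property~\eqref{cat} for $\nablaleft$ contributes $(\rho_{A'}(\alpha)f)b$ while property~\eqref{dog} for $\nablaleft$ kills the $f$-dependence in $\lconn_{f\beta}a$, and symmetrically for $\nablaright$; the surviving derivative terms assemble exactly into $(\rho(a+\alpha)f)(b+\beta)$. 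This is a routine but instructive computation and I would only indicate it.

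The main obstacle is the Jacobi identity, which is where the four structural conditions — flatness of $\nablaleft$ and $\nablaright$ together with~\eqref{crocodile} and~\eqref{alligator} — are exactly consumed. The strategy is to compute the Jacobiator $\llbracket\llbracket s_1,s_2\rrbracket, s_3\rrbracket + \text{cyclic}$ for $s_i = a_i + \alpha_i$ and to project it onto the two summands $A$ and $A'$, handling each projection separately. By $\mathbb{R}$-trilinearity it suffices to test on the three ``pure'' cases plus mixed ones; in practice one expands everything and sorts terms by type. The $A$-component of the Jacobiator splits as: the piece purely in $a_1,a_2,a_3$, which vanishes by the Jacobi identity of $[,]_A$; the pieces linear in the $\alpha$'s with two $a$'s, which vanish by compatibility~\eqref{crocodile} (the terms $\lconn_{\rconn_c\a}b - \lconn_{\rconn_b\a}c$ in~\eqref{crocodile} are precisely what is needed to absorb the cross terms coming from $\rho_{A'}$ acting through the anchor inside nested brackets); the pieces quadratic in the $\alpha$'s with one $a$, which vanish by flatness of $\nablaleft$ (the curvature $R^{\nablaleft}(\alpha,\beta)a = \lconn_\alpha\lconn_\beta a - \lconn_\beta\lconn_\alpha a - \lconn_{[\alpha,\beta]_{A'}}a$ appears and is zero); and the piece purely in the $\alpha$'s, which contributes nothing to the $A$-component. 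The $A'$-component is handled identically with the roles swapped, using the Jacobi identity of $[,]_{A'}$, condition~\eqref{alligator}, and flatness of $\nablaright$. The bookkeeping here is the real work; I would present it as a grouping of terms by ``number of Greek factors'' and cite which axiom kills each group.

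For the converse, suppose $L = A\oplus A'$ as vector bundles with $A,A'$ Lie subalgebroids of a Lie algebroid $L$. Decompose the $L$-bracket of a section $a\in\sections{A}$ and a section $\beta\in\sections{A'}$ according to $L = A\oplus A'$ and \emph{define} $\lconn_\beta a := -\,\mathrm{pr}_A([a,\beta]_L)$ and $\rconn_a\beta := \mathrm{pr}_{A'}([a,\beta]_L)$, so that $[a,\beta]_L = -\lconn_\beta a + \rconn_a\beta$ as stated. One then checks: these are genuine connections — the two Leibniz-type properties~\eqref{dog} and~\eqref{cat} follow from $\mathbb{R}$-bilinearity and the Leibniz rule of $[,]_L$ together with the fact that $\rho_L|_A = \rho_A$ and $\rho_L|_{A'} = \rho_{A'}$ (using that $A,A'$ are subalgebroids, their anchors are the restrictions of $\rho_L$); flatness of $\nablaleft$ and $\nablaright$ and the compatibility conditions~\eqref{crocodile},~\eqref{alligator} all follow by applying the Jacobi identity of $[,]_L$ to suitable triples (e.g. $(a,b,\gamma)$ for one of them, $(\alpha,\beta,c)$ for another) and projecting onto $A$ and $A'$; and finally that the bracket $[,]_L$ reconstructed from $[,]_A$, $[,]_{A'}$, $\nablaleft$, $\nablaright$ via the matched-sum formula agrees with the original $[,]_L$, which is immediate by bilinearity once it is known on pairs of pure sections. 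I expect no conceptual difficulty in the converse beyond carefully choosing which Jacobi-triple yields which identity; the forward Jacobi computation remains the crux.
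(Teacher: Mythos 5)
The paper states this theorem without proof, simply citing Mokri, so there is no in-paper argument to compare against; judged on its own, your proposal is the standard direct verification and it is correct. The Leibniz computation, the expansion of the Jacobiator into the eight pure triples, and the converse via $\lconn_{\beta}a:=-\mathrm{pr}_A([a,\beta]_L)$, $\rconn_a\beta:=\mathrm{pr}_{A'}([a,\beta]_L)$ are all sound. One refinement to your bookkeeping that is worth making explicit: each mixed triple feeds \emph{both} projections, and the two projections consume different hypotheses. For the triple $(a_1,a_2,\alpha)$ the $A$-component of the Jacobiator is exactly the defect of \eqref{crocodile}, while its $A'$-component is $-\bigl(\rconn_{a_1}\rconn_{a_2}\alpha-\rconn_{a_2}\rconn_{a_1}\alpha-\rconn_{[a_1,a_2]_A}\alpha\bigr)$, i.e.\ the curvature of $\nablaright$; dually, $(\alpha_1,\alpha_2,a)$ yields \eqref{alligator} in $A'$ and the flatness of $\nablaleft$ in $A$. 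So the four structural conditions are consumed in pairs by the two mixed triples (and, in the converse, are produced in pairs by the same two Jacobi triples), rather than being split cleanly between the two components as your phrasing suggests. Finally, if your definition of Lie algebroid includes the anchor being a bracket morphism, note that on mixed pairs this reads $\rho_{A'}(\rconn_a\beta)-\rho_A(\lconn_{\beta}a)=[\rho_A(a),\rho_{A'}(\beta)]$; it need not be checked separately since it follows from the Leibniz rule and the Jacobi identity by the usual argument, but it deserves a sentence.
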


\section{Matched pairs}\label{s:mp}
Let $(E,\ip{}{},\rho,\db{}{})$ be a Courant algebroid. 
Assume we are given two subbundles $E_1,E_2$ of $E$ such that $E=E_1\oplus E_2$ 
and $E_1^\perp=E_2$. 
Let $\pr_1$ (resp.\ $\pr_2$) denote the projection of $E$ onto $E_1$ (resp.\ $E_2$) and 
$\inj_1$ (resp.\ $\inj_2$) denote the inclusion of $E_1$ (resp.\ $E_2$) 
into $E$, respectively. 
Assume that $E_k$ is itself a Courant algebroid with anchor $\rho_k=\rho\circ\inj_k$, 
inner product $\ip{a}{b}_k=\ip{\inj_k a}{\inj_k b}$, and Dorfman bracket 
$\dbi{a}{b}{k}=\pr_k(\db{\inj_k a}{\inj_k b})$.
A natural question is how to recover the Courant algebroid
structure on $E$ from  $E_1$ and $E_2$.

\begin{prop}  The inner product and the anchor map of $E$ are uniquely 
determined by their restrictions to the subbundles $E_1$ and $E_2$.  Indeed, for $a,b\in\sections{E_1}$ 
and $\alpha,\beta\in\sections{E_2}$, we have 
\begin{gather}  
\<a\oplus\alpha,b\oplus\beta\> = \<a,b\>_1 +\<\alpha,\beta\>_2 , \label{bull} \\
\rho(a\oplus\alpha) = \rho_1(a)+\rho_2(\alpha) , \label{bear}
\end{gather}
where $a\oplus\alpha$ is shorthand for $\inj_1(a)+\inj_2(\alpha)$.
\end{prop}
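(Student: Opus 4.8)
The plan is to exploit the direct sum decomposition $E = E_1 \oplus E_2$ together with the orthogonality assumption $E_1^\perp = E_2$, which are the only hypotheses available beyond the definitions of $\rho_k$, $\ip{}{}_k$, and $\dbi{}{}{k}$ as restrictions/projections. Both claimed identities \eqref{bull} and \eqref{bear} are pointwise statements about the inner product and anchor, so no bracket computation is needed; the proof is essentially bilinearity plus bookkeeping.

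First I would prove \eqref{bull}. Write $a \oplus \alpha = \inj_1(a) + \inj_2(\alpha)$ and $b \oplus \beta = \inj_1(b) + \inj_2(\beta)$, expand $\ip{a\oplus\alpha}{b\oplus\beta}$ by $\RR$-bilinearity of $\ip{}{}$ into four terms $\ip{\inj_1 a}{\inj_1 b} + \ip{\inj_1 a}{\inj_2 \beta} + \ip{\inj_2 \alpha}{\inj_1 b} + \ip{\inj_2 \alpha}{\inj_2 \beta}$. The first term is $\ip{a}{b}_1$ and the last is $\ip{\alpha}{\beta}_2$ by the definitions of the induced inner products. The two cross terms vanish: $\inj_2\beta$ and $\inj_2\alpha$ lie in $E_2 = E_1^\perp$, hence are orthogonal to everything in $E_1$, in particular to $\inj_1 a$ and $\inj_1 b$. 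This gives \eqref{bull}. (One should note in passing that non-degeneracy of $\ip{}{}_k$ — needed for $E_k$ to be a Courant algebroid — is automatic here, since a vector in $E_1$ pairing trivially with all of $E_1$ would, being already orthogonal to $E_2$, pair trivially with all of $E$, contradicting non-degeneracy of $\ip{}{}$; this is worth a remark but is not strictly part of the statement.)

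Next I would prove \eqref{bear}. Here $\rho \colon E \to TM$ is a bundle map, hence fiberwise $\RR$-linear, so $\rho(\inj_1 a + \inj_2 \alpha) = \rho(\inj_1 a) + \rho(\inj_2 \alpha)$. By definition $\rho_k = \rho \circ \inj_k$, so $\rho(\inj_1 a) = \rho_1(a)$ and $\rho(\inj_2 \alpha) = \rho_2(\alpha)$, which is exactly \eqref{bear}. Finally, to justify the word ``uniquely'': any symmetric bilinear form and any anchor on $E$ restricting to the given data on $E_1$ and $E_2$ must, by the two displayed formulas (which used only bilinearity, the orthogonality $E_1 \perp E_2$, and the defining relations $\rho_k = \rho\circ\inj_k$, $\ip{}{}_k = \inj_k^*\ip{}{}$), agree with $\ip{}{}$ and $\rho$; so the reconstruction is forced.

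I do not anticipate a genuine obstacle: the statement is a direct consequence of linearity and the decomposition hypotheses, and the only subtlety is remembering to invoke $E_1^\perp = E_2$ to kill the cross terms in the inner product expansion. The contrast the paper is setting up — that the Dorfman bracket, unlike these two structures, is \emph{not} recovered by such a naive formula — is precisely why this proposition is stated separately and kept elementary.
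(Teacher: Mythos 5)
Your proof is correct: bilinearity of the pairing, the orthogonality $E_1^\perp=E_2$ killing the cross terms, and fiberwise linearity of the anchor are exactly what is needed, and the paper itself omits the proof precisely because this is the argument it has in mind. Nothing is missing.
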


\begin{prop} 
The bracket on $E$ induces an $E_1$-connection on $E_2$: 
\beq{bird} \rconn_{a} \beta=\pr_2\big(\db{(\inj_1 a)}{(\inj_2 \beta)}\big) \eeq
and an $E_2$-connection on $E_1$: 
\beq{fish} \lconn_{\beta} a=\pr_1\big(\db{(\inj_2 \beta)}{(\inj_1 a)}\big) .\eeq
These connections preserve the inner products on $E_1$ and $E_2$: 
\begin{gather}
\rho(\alpha)\ip{a}{b}_1= \ip{\lconn_{\alpha}a}{b}_1 +\ip{a}{\lconn_{\alpha}b}_1 \label{cow} \\
\rho(a)\ip{\a}{\b}_2= \ip{\rconn_a \a}{\b}_2 +\ip{\a}{\rconn_{a}\b}_2 \label{calf}
\end{gather}

Moreover, we have 
\beq{parrot} (a\oplus0)\dia(0\oplus\beta) = -\lconn_{\beta} a \oplus \rconn_{a} \beta \eeq
for $a\in\sections{E_1}$ and $\beta\in\sections{E_2}$.
\end{prop}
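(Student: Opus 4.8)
The plan is to verify the four displayed identities by direct computation, exploiting the defining axioms of the Courant algebroid $E$ together with the assumption $E_1^\perp = E_2$. First I would establish \eqref{parrot}, since it underpins everything else. For $a \in \sections{E_1}$ and $\beta \in \sections{E_2}$, decompose $(\inj_1 a)\dia(\inj_2\beta) \in \sections{E}$ along $E = E_1 \oplus E_2$: by definition the $E_1$-component is $\pr_1(\db{\inj_1 a}{\inj_2\beta})$ and the $E_2$-component is $\pr_2(\db{\inj_1 a}{\inj_2\beta})=\rconn_a\beta$. The only subtlety is identifying the $E_1$-component with $-\lconn_\beta a$. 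For this I would use the symmetry axiom \eqref{nSkew} in its polarized form, $\phi\dia\psi + \psi\dia\phi = \D\ip{\phi}{\psi}$, applied to $\phi = \inj_1 a$, $\psi = \inj_2\beta$. Since $\ip{\inj_1 a}{\inj_2\beta} = 0$ (because $E_1^\perp = E_2$), we get $(\inj_1 a)\dia(\inj_2\beta) = -(\inj_2\beta)\dia(\inj_1 a)$, and projecting onto $E_1$ gives $\pr_1(\db{\inj_1 a}{\inj_2\beta}) = -\pr_1(\db{\inj_2\beta}{\inj_1 a}) = -\lconn_\beta a$. This yields \eqref{parrot}.

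Next I would check that \eqref{bird} and \eqref{fish} genuinely define $E$-connections, i.e.\ that $\rconn$ satisfies \eqref{dog} and \eqref{cat} with $E$ replaced by $E_1$ and $V$ by $E_2$ (and symmetrically for $\lconn$). The $\smooth(M)$-linearity in the first slot, $\rconn_{fa}\beta = f\rconn_a\beta$, follows from applying the Leibniz rule \eqref{Leibniz} to $\db{\inj_1(fa)}{\inj_2\beta} = \db{f\inj_1 a}{\inj_2\beta}$; one must use that $\db{f\phi}{\psi} = f(\phi\dia\psi) - (\rho(\psi)f)\phi + \D\ip{\phi}{\psi}\cdot f$ (the left Leibniz rule, derivable from \eqref{Leibniz} and \eqref{nSkew}), and then observe that the correction terms $-(\rho(\inj_2\beta)f)\inj_1 a$ and $f\D\ip{\inj_1 a}{\inj_2\beta} = 0$ either lie in $E_1$ (and hence vanish under $\pr_2$) or vanish outright. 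The derivation property in the second slot, $\rconn_a(f\beta) = (\rho_1(a)f)\beta + f\rconn_a\beta$, is immediate from \eqref{Leibniz} together with $\rho_1 = \rho\circ\inj_1$ and $\pr_2(\inj_1 a)$-type terms vanishing. The flatness/Jacobi-type conditions are deliberately \emph{not} asserted here (they will become the content of the matched-pair axioms later), so at this stage I only need the connection axioms.

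Finally, for the metric-compatibility identities \eqref{cow} and \eqref{calf}, I would start from the ad-invariance axiom \eqref{adInvar} of $E$, polarized to $\rho(\phi)\ip{\psi}{\psi'} = \ip{\phi\dia\psi}{\psi'} + \ip{\psi}{\phi\dia\psi'}$. For \eqref{calf}, take $\phi = \inj_1 a$, $\psi = \inj_2\alpha$, $\psi' = \inj_2\beta$: the left side is $\rho(\inj_1 a)\ip{\inj_2\alpha}{\inj_2\beta} = \rho_1(a)\ip{\alpha}{\beta}_2$, and on the right $\ip{(\inj_1 a)\dia(\inj_2\alpha)}{\inj_2\beta} = \ip{\pr_2\db{\inj_1 a}{\inj_2\alpha}}{\inj_2\beta}$ because the $E_1$-component of $\db{\inj_1 a}{\inj_2\alpha}$ pairs trivially with $\inj_2\beta \in \sections{E_2} = E_1^\perp$; this equals $\ip{\rconn_a\alpha}{\beta}_2$, and similarly for the other term. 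Thus \eqref{calf} follows, and \eqref{cow} follows by the symmetric choice $\phi = \inj_2\alpha$, $\psi = \inj_1 a$, $\psi' = \inj_1 b$. The main obstacle, modest as it is, lies in step two: carefully tracking which correction terms from the Leibniz rule land in which summand, and in particular invoking the left Leibniz rule $\db{f\phi}{\psi}$ correctly — everything else is a routine unwinding of definitions once $E_1^\perp = E_2$ is used to kill cross pairings.
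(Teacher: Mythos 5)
Your argument is correct and follows essentially the same route as the paper's proof, which likewise derives \eqref{bird} and \eqref{fish} from the Leibniz rule, \eqref{cow} and \eqref{calf} from ad-invariance, and \eqref{parrot} from \eqref{nSkew} combined with $E_1^\perp=E_2$. One harmless slip: the last correction term in the left Leibniz rule should be $\ip{\phi}{\psi}\,\D f$ rather than $f\,\D\ip{\phi}{\psi}$, but both vanish in your application since $\ip{\inj_1 a}{\inj_2\beta}=0$, so nothing breaks.
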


\begin{proof} The first two equations follow from Leibniz rule \eqref{Leibniz}. 
The next two equations follow from ad-invariance \eqref{adInvar}. 
The last equation uses in addition axiom~\eqref{nSkew}.
\end{proof}

\begin{prop} 
\begin{enumerate}
\item For all $a,b\in\sections{E_1}$, we have 
\beq{pig} \db{(a\oplus0)}{(b\oplus0)} = (\dbi{a}{b}{1}) \oplus 
\big( \tfrac{1}{2} \DD_2 \ip{a}{b}_1 + \oo(a,b) \big) ,\eeq 
where $\oo\:\wedge^2\sections{E_1}\to \Gamma(E_2)$ is defined by the relation 
\beq{monkey} \oo(a,b)=\thalf \pr_2 (\db{\inj_1 a}{\inj_1 b}-\db{\inj_1 b}{\inj_1 a}) .\eeq 
In fact $\oo$ is entirely determined by the connection 
$\nablaleft\:\sections{E_2}\otimes\sections{E_1}\to\sections{E_1}$ through the relation 
\beq{donkey} \ip{\gamma}{\oo(a,b)}_2 = \thalf (\ip{\lconn_{\gamma} a}{b}_1 
- \ip{a}{\lconn_{\gamma} b}_1) \eeq 
\item For all $\alpha,\beta\in\sections{E_2}$, we have 
\beq{snake} \db{(0\oplus\alpha)}{(0\oplus\beta)} = 
\big( \tfrac{1}{2} \DD_1 \ip{\alpha}{\beta}_2 + \w(\alpha,\beta)\big) 
\oplus (\dbi{\alpha}{\beta}{2}) ,\eeq
where $\w\:\wedge^2\sections{E_2}\to \sections{E_1}$ is defined by the relation 
\beq{elephant} \w(\alpha,\beta)=\thalf \pr_1 (\db{\inj_2 \alpha}{\inj_2 \beta} 
- \db{\inj_2 \beta}{\inj_2 \alpha}) .\eeq 
In fact, $\w$ is entirely determined by the connection 
$\nablaright:\sections{E_1}\otimes\sections{E_2}\to\sections{E_2}$
 through the relation 
\beq{shark} \ip{c}{\w(\alpha,\beta)}_1 = \thalf (\ip{\rconn_{c} \alpha}{\beta}_2 
- \ip{\alpha}{\rconn_{c} \beta}_2) .\eeq 
\end{enumerate}
\end{prop}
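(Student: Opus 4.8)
The plan is to prove each of the two items by decomposing the bracket $\db{(a\oplus 0)}{(b\oplus 0)}$ (resp.\ $\db{(0\oplus\alpha)}{(0\oplus\beta)}$) into its $E_1$- and $E_2$-components and identifying each piece. Since the two items are entirely symmetric under swapping the roles of $E_1$ and $E_2$ (together with $\nablaleft\leftrightarrow\nablaright$, $\oo\leftrightarrow\w$, $\pr_1\leftrightarrow\pr_2$), it suffices to carry out item~(1) in detail and then invoke symmetry for item~(2). First I would write $\db{\inj_1 a}{\inj_1 b}=\inj_1\pr_1(\db{\inj_1 a}{\inj_1 b})+\inj_2\pr_2(\db{\inj_1 a}{\inj_1 b})$. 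The $E_1$-component is $\dbi{a}{b}{1}$ by the very definition of the induced Dorfman bracket on $E_1$, which accounts for the first summand in \eqref{pig}. For the $E_2$-component, I would split it into its symmetric and skew-symmetric parts in $a,b$: the skew part is $\oo(a,b)$ by definition \eqref{monkey}, and the symmetric part is $\thalf\pr_2\big(\db{\inj_1 a}{\inj_1 b}+\db{\inj_1 b}{\inj_1 a}\big)$.

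The symmetric part is handled using axiom~\eqref{nSkew}: polarizing $\phi\dia\phi=\thalf\D\ip{\phi}{\phi}$ gives $\phi_1\dia\phi_2+\phi_2\dia\phi_1=\D\ip{\phi_1}{\phi_2}$, so $\db{\inj_1 a}{\inj_1 b}+\db{\inj_1 b}{\inj_1 a}=\D\ip{\inj_1 a}{\inj_1 b}=\D\ip{a}{b}_1$. Applying $\pr_2$ and comparing with the definition of $\DD_2$ (the operator $\smooth(M)\to\sections{E_2}$ attached to the Courant algebroid $E_2$, namely $\ip{\DD_2 f}{\gamma}_2=\rho_2^*(\ud f)\gamma=(\rho_2(\gamma))f$), I would check that $\pr_2(\D f)=\DD_2 f$ for all $f\in\smooth(M)$. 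This is a pairing computation: for $\gamma\in\sections{E_2}$, $\ip{\pr_2(\D f)}{\gamma}_2=\ip{\D f}{\inj_2\gamma}=(\rho(\inj_2\gamma))f=(\rho_2(\gamma))f=\ip{\DD_2 f}{\gamma}_2$, using nondegeneracy of $\ip{}{}_2$ to conclude. This yields the $\tfrac12\DD_2\ip{a}{b}_1$ term and completes \eqref{pig}.

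It remains to prove the characterization \eqref{donkey} of $\oo$ in terms of $\nablaleft$. Here I would pair $\oo(a,b)$ against an arbitrary section $\gamma\in\sections{E_2}$: by \eqref{monkey} and \eqref{bull}, $\ip{\gamma}{\oo(a,b)}_2=\thalf\ip{\inj_2\gamma}{\db{\inj_1 a}{\inj_1 b}-\db{\inj_1 b}{\inj_1 a}}$. Each of the two terms $\ip{\inj_2\gamma}{\db{\inj_1 a}{\inj_1 b}}$ I would rewrite using ad-invariance~\eqref{adInvar}; more precisely, from \eqref{adInvar} one gets the polarized identity $\rho(\phi)\ip{\psi}{\chi}=\ip{\phi\dia\psi}{\chi}+\ip{\psi}{\phi\dia\chi}$, which I apply with $\phi=\inj_1 b$, $\psi=\inj_1 a$, $\chi=\inj_2\gamma$ to trade $\ip{\inj_2\gamma}{\db{\inj_1 b}{\inj_1 a}}$ for terms involving $\db{\inj_1 b}{\inj_2\gamma}$ and $\rho(\inj_1 b)\ip{\inj_1 a}{\inj_2\gamma}=0$ (the latter vanishing since $\inj_1 a\perp\inj_2\gamma$), and symmetrically for the other term. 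After using $\pr_1(\db{\inj_2\gamma}{\inj_1 a})=\lconn_{\gamma}a$ from \eqref{fish} and $\ip{\inj_1 a}{\db{\inj_2\gamma}{\inj_1 b}}=\ip{a}{\pr_1\db{\inj_2\gamma}{\inj_1 b}}_1=\ip{a}{\lconn_\gamma b}_1$ (again because the $E_2$-part of $\db{\inj_2\gamma}{\inj_1 b}$ is orthogonal to $\inj_1 a$), the right-hand side collapses to $\thalf(\ip{\lconn_\gamma a}{b}_1-\ip{a}{\lconn_\gamma b}_1)$, giving \eqref{donkey}. The main obstacle is purely bookkeeping: keeping track of which terms are killed by the orthogonality $E_1\perp E_2$ and which projectors appear where, and confirming that the ad-invariance identity is applied with the correct slot assignments so that the unwanted $\rho(\cdot)\ip{\cdot}{\cdot}$ terms all vanish. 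Item~(2) then follows verbatim by the $E_1\leftrightarrow E_2$ symmetry, replacing \eqref{monkey},\eqref{fish},\eqref{donkey} by \eqref{elephant},\eqref{bird},\eqref{shark} respectively.
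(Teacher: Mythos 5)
Your proof is correct and follows essentially the same route as the paper's: the symmetric part of the bracket is identified via axiom \eqref{nSkew} (with the observation $\pr_2\circ\D=\DD_2$), and $\oo$ is characterized by pairing against a section $\gamma$ of $E_2$ and invoking ad-invariance \eqref{adInvar} together with \eqref{parrot}. The only cosmetic difference is that the paper applies \eqref{adInvar} once and then uses the metric-compatibility \eqref{cow} to reach \eqref{donkey}, whereas you apply \eqref{adInvar} to both orderings of the bracket and subtract; the content is the same.
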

\begin{proof}  From axiom \eqref{nSkew} we conclude that the symmetric part of the bracket is given by
\[ (a\oplus0)\dia(a\oplus0)=\tfrac12\D\<a,a\>_1=\tfrac12\D_1\<a,a\>_1\oplus\tfrac12\D_2 \<a,a\>_1 .\] 
Moreover using \eqref{adInvar} we get
\begin{equation*}
  \<0\oplus\gamma,(a\oplus0)\dia(b\oplus0)\> = \rho(a\oplus0)\<0\oplus\gamma,b\oplus0\> 
-\<(a\oplus0)\dia(0\oplus\gamma),(b\oplus0)\> 
   =\<\lconn_\gamma a,b\>_1
\end{equation*}
The formula for $\w$ occurs under analog considerations for $0\oplus\alpha$ and $0\oplus\beta$.
\end{proof}

As a consequence,  we obtain the formula 
\begin{multline} 
\db{(a\oplus\alpha)}{(b\oplus\beta)} 
= \big( \dbi{a}{b}{1} +\lconn_{\alpha} b-\lconn_{\beta} a 
+\w(\alpha,\beta)+\thalf\DD_1\ip{\alpha}{\beta}_2 \big) \\
\oplus \big( \dbi{\alpha}{\beta}{2} +\rconn_{a} \beta-\rconn_{b} \alpha 
+\oo(a,b)+\thalf\DD_2\ip{a}{b}_1 \big)  \label{fullBracket}
,\end{multline}
which shows that the Dorfman bracket on $\sections{E}$ can be recovered from 
the Courant algebroid structures on $E_1$ and $E_2$ together with the 
connections  $\nablaleft$ and $\nablaright$.

\begin{lem}\label{l:quark}
For any $f\in\smooth(M)$, $b\in\sections{E_1}$, and $\b\in\sections{E_2}$,
 we have $\rconn_{\DD_1 f} \beta =0$ and $\lconn_{\DD_2 f} b =0$.
\end{lem}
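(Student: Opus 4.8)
The plan is to exploit the identity $(\DD f)\dia\phi = 0$ stated in the Preliminaries for any Courant algebroid, applied to the ambient Courant algebroid $E$, together with the explicit formula~\eqref{fullBracket} (or more directly~\eqref{bird} and~\eqref{fish}) that expresses the connections $\nablaright$ and $\nablaleft$ in terms of the Dorfman bracket on $E$. The key observation is that $\DD_1 f$, viewed as a section of $E$ via $\inj_1$, is \emph{not} the same as $\DD f$; rather, by~\eqref{bull} one has $\<\DD f,\phi\> = \rho^*(\ud f)\phi = (\ud f)(\rho(\phi))$, and decomposing $\phi = a\oplus\alpha$ and using~\eqref{bear} gives $\<\DD f,a\oplus\alpha\> = (\ud f)(\rho_1(a)) + (\ud f)(\rho_2(\alpha)) = \<\DD_1 f,a\>_1 + \<\DD_2 f,\alpha\>_2$. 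Hence $\DD f = (\DD_1 f)\oplus(\DD_2 f)$ in $E$, i.e.\ $\inj_1(\DD_1 f) + \inj_2(\DD_2 f) = \DD f$.

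First I would record this decomposition $\DD f = (\DD_1 f)\oplus(\DD_2 f)$. Then, to compute $\rconn_{\DD_1 f}\beta = \pr_2\big(\db{\inj_1(\DD_1 f)}{\inj_2\beta}\big)$, I would like to replace $\inj_1(\DD_1 f)$ by $\DD f$; the discrepancy is $\inj_2(\DD_2 f)$, so $\inj_1(\DD_1 f) = \DD f - \inj_2(\DD_2 f)$. Since $(\DD f)\dia\phi = 0$, we get $\db{\inj_1(\DD_1 f)}{\inj_2\beta} = -\db{\inj_2(\DD_2 f)}{\inj_2\beta}$. Now apply $\pr_2$: by the definition of the Courant bracket $\dbi{}{}{2}$ on $E_2$, we have $\pr_2\big(\db{\inj_2(\DD_2 f)}{\inj_2\beta}\big) = \dbi{\DD_2 f}{\beta}{2}$, and since $E_2$ is itself a Courant algebroid with $\DD_2$ its own operator, the ambient identity $(\DD_2 f)\dia_2 \beta = 0$ applies, so this vanishes. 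Therefore $\rconn_{\DD_1 f}\beta = 0$. The argument for $\lconn_{\DD_2 f} b = 0$ is symmetric, swapping the roles of $E_1$ and $E_2$ and using~\eqref{fish}.

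The main obstacle — really the only subtle point — is getting the decomposition $\DD f = (\DD_1 f)\oplus(\DD_2 f)$ correct and being careful about the distinction between the operator $\DD$ of $E$ and the operators $\DD_1,\DD_2$ of the component Courant algebroids; once that bookkeeping is in place, everything reduces to the elementary fact $(\DD g)\dia\phi = 0$ applied in the appropriate Courant algebroid. One should also check that $\pr_2\big(\db{\inj_2\gamma}{\inj_2\beta}\big) = \dbi{\gamma}{\beta}{2}$ holds for $\gamma = \DD_2 f$, but this is immediate from the very definition of $\dbi{}{}{2}$ given in Section~\ref{s:mp}.
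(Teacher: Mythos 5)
Your proof is correct and follows essentially the same route as the paper's one-line argument: both rest on the decomposition $\D f = \D_1 f\oplus\D_2 f$ and on the identity $(\D g)\dia\phi=0$ applied first in the ambient Courant algebroid $E$ and then in $E_2$. The paper's step $\pr_2\big((\D_1 f\oplus 0)\dia(0\oplus\beta)\big)=\pr_2\big(\D f\dia(0\oplus\beta)\big)$ silently uses exactly the substitution and the vanishing of $(\D_2 f)\dia_2\beta$ that you spell out, so your version just supplies the bookkeeping the paper omits.
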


\begin{proof} 
We have $ \rconn_{\D_1 f}\beta = \pr_2((\D_1 f\oplus0)\dia(0\oplus\beta)) = \pr_2(\D f\dia(0\oplus\beta)) = 0 $.
\end{proof}
Set
\begin{align}\label{dolphin} \Rright(a,b)\alpha &:= \rconn_{a}\rconn_{b} \alpha 
-\rconn_{b}\rconn_{a} \alpha -\rconn_{\dbi{a}{b}{1}}\alpha ,\\
  \label{wale} \Rleft(\alpha,\beta)a &:= \lconn_\alpha\lconn_\beta a 
-\lconn_\beta \lconn_\alpha a -\lconn_{\dbi{\alpha}{\beta}{2}}a  \;.
\end{align} 
\begin{lem}  The curvatures $\Rright$ and $\Rleft$ are sections of $\wedge^2 E_1^*\otimes\mathfrak{o}(E_2)
\cong\wedge^2 E_1^*\otimes\wedge^2 E_2^*$, where $\mathfrak{o}(E_2)$ 
is the bundle of skew-symmetric endomorphisms of $(E_2,\<~,~\>_2)$.
\end{lem}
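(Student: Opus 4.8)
The plan is to show that, for each pair $a,b\in\sections{E_1}$, the operator $\Rright(a,b)\colon\sections{E_2}\to\sections{E_2}$ is $\smooth(M)$-linear in all three slots and takes values in the skew-symmetric endomorphisms of $(E_2,\ip{}{}_2)$; the analogous statements for $\Rleft$ follow by symmetry. The tensoriality in $a$ and $b$ is the combination of facts that $\nablaright$ is an $E_1$-connection and $\dbi{a}{b}{1}$ is the Dorfman bracket of the Courant algebroid $E_1$: one computes $\Rright(fa,b)\alpha$ and $\Rright(a,b)(f\alpha)$ using the Leibniz rule \eqref{cat}, the identity $\conn_{f\psi}v=f\conn_\psi v$ \eqref{dog} for $\nablaright$ (which holds because \eqref{bird} defines an honest $E_1$-connection), the Leibniz rule \eqref{Leibniz} for $\dbi{}{}{1}$, and the fact that $\rho_1(\dbi{a}{b}{1})=[\rho_1(a),\rho_1(b)]$ from \eqref{rhomor}; all the derivative terms cancel in the standard curvature-of-a-connection bookkeeping, and in the $a$-slot the extra term produced is $\rconn_{(\rho_1(b)f)a-(\rho_1(a)f)b+\dbi{a}{b}{1}\text{-correction}}\alpha$, which is killed precisely by Lemma~\ref{l:quark} together with the symmetry identity \eqref{nSkew} for $\dbi{}{}{1}$. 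Thus $\Rright$ is a genuine section of $\wedge^2 E_1^*\otimes\operatorname{End}(E_2)$.

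Next I would prove the skew-symmetry with respect to $\ip{}{}_2$, i.e.\ that for all $\alpha,\gamma\in\sections{E_2}$ one has $\ip{\Rright(a,b)\alpha}{\gamma}_2+\ip{\alpha}{\Rright(a,b)\gamma}_2=0$. The key input here is the ad-invariance relation \eqref{calf}, $\rho(a)\ip{\alpha}{\gamma}_2=\ip{\rconn_a\alpha}{\gamma}_2+\ip{\alpha}{\rconn_a\gamma}_2$, which says each $\rconn_a$ is skew with respect to $\ip{}{}_2$ up to the derivation term $\rho(a)$. Applying \eqref{calf} twice and subtracting, the second-order derivative terms $\rho(a)\rho(b)\ip{\alpha}{\gamma}_2$ and $\rho(b)\rho(a)\ip{\alpha}{\gamma}_2$ combine into $[\rho_1(a),\rho_1(b)]\ip{\alpha}{\gamma}_2$, which by \eqref{rhomor} equals $\rho_1(\dbi{a}{b}{1})\ip{\alpha}{\gamma}_2$; this is exactly cancelled by the contribution of the $-\rconn_{\dbi{a}{b}{1}}$ term (again via \eqref{calf}). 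What remains is precisely $\ip{\Rright(a,b)\alpha}{\gamma}_2+\ip{\alpha}{\Rright(a,b)\gamma}_2=0$. Hence $\Rright(a,b)\in\Gamma(\mathfrak{o}(E_2))$, and the canonical isomorphism $\mathfrak{o}(E_2)\cong\wedge^2 E_2^*$ induced by the nondegenerate $\ip{}{}_2$ gives the stated identification $\Rright\in\Gamma(\wedge^2 E_1^*\otimes\wedge^2 E_2^*)$.

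The argument for $\Rleft$ is word-for-word the same with the roles of $E_1$ and $E_2$ (and of $\nablaright$ and $\nablaleft$) interchanged, using \eqref{cow} in place of \eqref{calf}, the second half of Lemma~\ref{l:quark}, and \eqref{rhomor} for $E_2$. I expect the main obstacle to be purely organizational rather than conceptual: keeping track of which Leibniz/ad-invariance identity is being used in which slot, and verifying that the ``anomalous'' terms arising from tensoriality checks ($\rho_1(a)f$ and $\rho_1(b)f$ times a connection in a $\DD_1$-exact direction) vanish — this is where Lemma~\ref{l:quark} is indispensable, since $\rconn_{\DD_1 f}\beta=0$ is precisely what makes the would-be failure of $\smooth(M)$-linearity disappear. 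No step requires anything beyond the axioms already recorded and the two auxiliary lemmas.
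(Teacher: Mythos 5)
Your proposal is correct and follows essentially the same route as the paper's (very terse) proof: $\smooth(M)$-linearity in $\alpha$ via \eqref{rhomor}, linearity and antisymmetry in $a,b$ via the Leibniz rules together with \eqref{nSkew} and Lemma~\ref{l:quark}. You are in fact slightly more complete than the paper, which never writes out the check that $\Rright(a,b)$ is skew with respect to $\<~,~\>_2$; your double application of \eqref{calf} combined with \eqref{rhomor} supplies exactly that computation.
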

\begin{proof}  The $\smooth(M)$-linearity of $\Rright(a,b)\alpha$ in $\alpha$ follows from \eqref{rhomor}. 
The curvature $\Rright$ is also $\smooth(M)$-linear in $b$. 
In view of Lemma \ref{l:quark},  it is also skew-symmetric
with respect to   $a$ and $b$.
\end{proof}

\begin{thm}\label{glass} Assume we are given two Courant algebroids $E_1$ and $E_2$ 
over the same manifold $M$ and two connections 
$\nablaright\:\sections{E_1}\otimes\sections{E_2}\to\sections{E_2}$
 and $\nablaleft\:\sections{E_2}\otimes\sections{E_1}\to\sections{E_1}$ preserving the fiberwise
 metrics and satisfying $\rconn_{\DD_1 f} \beta =0$ and $\lconn_{\DD_2 f} b =0$ for all $f\in\smooth(M)$, 
$b\in\sections{E_1}$, and $\b\in\sections{E_2}$.
Then the inner product \eqref{bull}, the anchor map \eqref{bear}, and the bracket \eqref{fullBracket} on the direct sum 
$E=E_1\oplus E_2$ satisfy \eqref{Leibniz}, \eqref{nSkew}, and \eqref{adInvar}. 
Moreover, the Jacobi identity \eqref{Jacobi} is fully equivalent to the following group of properties:
\begin{gather}
  \begin{split}&\lconn_\a(a_1\dia_1 a_2)-(\lconn_\a a_1)\dia_1 a_2-a_1\dia_1(\lconn_\a a_2)%- \\&
    -\lconn_{\rconn_{a_2}\a}a_1 +\lconn_{\rconn_{a_1}\a}a_2 \\
    &\qquad =-\w (\a,\oo(a_1,a_2)+\thalf\D_2\<a_1,a_2\>)
      -\thalf \D_1\big\<\a,\oo(a_1,a_2)+\thalf \D_2\<a_1,a_2\>\,\big\>
  \end{split}\label{derofBr1}\\
  \begin{split}&\rconn_{a}(\a_1\dia_2\a_2)-(\rconn_{a} \a_1)\dia_2\a_2-\a_1\dia_2(\rconn_{a} \a_2)
    -\rconn_{\lconn_{\a_2}a}\a_1 +\rconn_{\lconn_{\a_1}a}\a_2 \\
    &\qquad=-\oo(a,\w (\a_1,\a_2)+\thalf \D_1\<\a_1,\a_2\>)
      -\thalf \D_2\big\<a,\w (\a_1,\a_2)+\thalf \D_1\<\a_1,\a_2\>\,\big\>
  \end{split}\label{derofBr2}\\
  \Rright+\Rleft = 0\label{curvCompat}\\
  \lconn_{\oo(a_1,a_2)} a_3 +\text{c.p.}=0 \label{new4X} \\
  \rconn_{\w (\a_1,\a_2)} \a_3 +\text{c.p.}=0  \label{new4Y}
\end{gather}
\end{thm}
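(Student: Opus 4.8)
The plan is to verify the easy axioms by direct substitution and then reduce the Jacobi identity to the five displayed conditions by a careful bookkeeping argument. First I would check \eqref{Leibniz}, \eqref{nSkew} and \eqref{adInvar} for the bracket \eqref{fullBracket}: the Leibniz rule follows because each of $\dia_1$, $\dia_2$ satisfies \eqref{Leibniz} on its own component and $\nablaleft$, $\nablaright$ satisfy the connection axiom \eqref{cat} with respect to the anchors $\rho_1$, $\rho_2$, while the $\DD_1$, $\DD_2$ and $\oo$, $\w$ terms are $\smooth(M)$-bilinear (using $\DD_k(fg)=(\rho_k^*\ud(fg))$ and the fact that $\oo$, $\w$ are tensorial, which is clear from \eqref{donkey} and \eqref{shark}). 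Axiom \eqref{nSkew} is already built into the construction: the symmetric part of \eqref{fullBracket} is $\thalf\DD_1\ip{a\oplus\alpha}{a\oplus\alpha}\oplus\thalf\DD_2\ip{a\oplus\alpha}{a\oplus\alpha}$ because $\oo$, $\w$ are skew and the cross-terms $\lconn_\alpha a-\lconn_\alpha a$ etc.\ cancel; one must also note $\DD=\DD_1\oplus\DD_2$ under the identifications \eqref{bull}, \eqref{bear}. For \eqref{adInvar} one expands $\ip{(a\oplus\alpha)\dia(b\oplus\beta)}{b\oplus\beta}$ using \eqref{bull}: the $\dia_1$, $\dia_2$ contributions give the ad-invariance of $E_1$, $E_2$; the $\nablaleft$, $\nablaright$ terms pair up and vanish by the metric-preservation hypotheses \eqref{cow}, \eqref{calf}; the $\oo$, $\w$ terms cancel against the $\DD$ terms precisely by the defining relations \eqref{donkey}, \eqref{shark}.

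The heart of the proof is the Jacobi identity \eqref{Jacobi}. Here I would exploit bilinearity of everything to reduce to checking the Jacobiator $J(\phi_1,\phi_2,\phi_3):=\phi_1\dia(\phi_2\dia\phi_3)-(\phi_1\dia\phi_2)\dia\phi_3-\phi_2\dia(\phi_1\dia\phi_3)$ on arguments that are each purely in $\sections{E_1}$ or purely in $\sections{E_2}$. By the symmetry of the Jacobiator (it is alternating modulo \eqref{nSkew}), there are, up to permutation, four cases: $(E_1,E_1,E_1)$, $(E_1,E_1,E_2)$, $(E_1,E_2,E_2)$, and $(E_2,E_2,E_2)$. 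The cases $(E_1,E_1,E_1)$ and $(E_2,E_2,E_2)$ are the subtlest because the bracket of two $E_1$-sections already has an $E_2$-component via $\oo$ (and $\thalf\DD_2\ip{\cdot}{\cdot}_1$), so $J$ on three $E_1$-sections has an $E_1$-part and an $E_2$-part; the $E_1$-part, after using the Jacobi identity for $\dia_1$ and Lemma~\ref{l:quark}, collapses to $\lconn_{\oo(a_1,a_2)}a_3+\text{c.p.}$ plus $\DD_1$-exact terms that vanish because $\lconn_{\DD_2 f}=0$, giving \eqref{new4X}; the $E_2$-part, together with \eqref{adInvar} already established, encodes the cocycle-type identity for $\oo$ that turns out to be \emph{equivalent} to \eqref{new4X} (this is the place where metric-nondegeneracy of $E_2$ lets one convert the $\oo$-cocycle condition into the connection identity). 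Symmetrically, $(E_2,E_2,E_2)$ yields \eqref{new4Y}. The mixed case $(E_1,E_1,E_2)$, expanded with \eqref{fullBracket}, produces in its $E_1$-component exactly \eqref{derofBr1} (the derivation-failure of $\nablaleft$ over $\dia_1$ balanced against $\w$ and $\thalf\DD_1\ip{\cdot}{\cdot}$) and in its $E_2$-component a curvature term contributing $\Rright(a_1,a_2)\beta$; the case $(E_1,E_2,E_2)$ gives \eqref{derofBr2} and $-\Rleft$; and matching the two curvature contributions (which appear with opposite signs because of how the three arguments distribute between the connections) yields \eqref{curvCompat}.

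The main obstacle, and the step I would allocate most care to, is the bookkeeping in the two ``pure'' cases $(E_1,E_1,E_1)$ and $(E_2,E_2,E_2)$: one must track how the $\oo$-terms and the $\thalf\DD_2\ip{\cdot}{\cdot}_1$-terms propagate through a second bracket, show that the genuinely new scalar/$\DD$-exact contributions die (Lemma~\ref{l:quark} is essential here, as is $(\DD f)\dia\phi=0$), and then argue that the surviving $E_2$-valued identity is not an independent sixth condition but is forced by \eqref{new4X} via the pairing \eqref{donkey} and nondegeneracy. A clean way to organize this is to introduce the ``twisted'' $E_1$-valued map $\Omega^\sharp(a,b):=\oo(a,b)+\thalf\DD_2\ip{a}{b}_1$ and its $E_2$-analogue, rewrite \eqref{fullBracket} compactly in terms of these, and observe that $\dia_k$, $\nabla$, and $\Omega^\sharp/\w^\sharp$ are exactly the data of an $\oo(E_k)$-valued extension; the five conditions are then recognizable as the vanishing of the structure obstruction of that extension, case by case. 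The remaining cases are then routine, if lengthy, expansions, and I would present them as a single aligned computation per case rather than in prose.
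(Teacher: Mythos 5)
Your proposal follows essentially the same route as the paper's (very terse) proof: the axioms \eqref{Leibniz}, \eqref{nSkew}, \eqref{adInvar} are checked by direct substitution into \eqref{fullBracket} using \eqref{donkey} and \eqref{shark}, and the Jacobi identity is analyzed case by case on triples of pure sections of $E_1$ and $E_2$, with the conditions \eqref{curvCompat}, \eqref{new4X}, \eqref{new4Y} extracted by pairing the Jacobiator against an arbitrary fourth section. The one place your described mechanism is slightly off is \eqref{curvCompat}: it does not arise from ``matching'' the $(E_1,E_1,E_2)$ and $(E_1,E_2,E_2)$ cases (whose curvature terms take values in different bundles), but already appears within a single mixed case, because pairing its Jacobiator with a fourth section converts the $\oo$- (resp.\ $\w$-) terms into second covariant derivatives via \eqref{donkey} (resp.\ \eqref{shark}), producing $\langle\Rright(a_1,a_2)\alpha,\beta\rangle+\langle\Rleft(\alpha,\beta)a_1,a_2\rangle$ up to terms killed by \eqref{derofBr1}--\eqref{derofBr2}.
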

\begin{proof}  Axioms \eqref{Leibniz}, \eqref{nSkew}, and \eqref{adInvar} are straightforward computations 
from the definition \eqref{fullBracket} using \eqref{donkey} and \eqref{shark}.
To check the Jacobi identity, it is helpful to compute for triples of sections of $E_1$ and $E_2$, respectively, 
which gives 8 cases.  Equations
 \eqref{curvCompat}, \eqref{new4X}, and \eqref{new4Y} arise when  pairing the Jacobiator
  with an arbitrary section of $E_1\oplus E_2$. 
\end{proof}

We are now ready to introduce the main notion of the paper.

\begin{defn}\label{pair}
Two Courant algebroids $E_1$ and $E_2$ over the same manifold $M$ 
together with a pair of connections satisfying the properties listed in Theorem~\ref{glass} 
are said to form a \emph{matched pair}. The induced Courant algebroid structure 
on the direct sum vector bundle $E_1\oplus E_2$ is called the \emph{matched sum} 
of the pair $(E_1,E_2)$.
\end{defn}

\section{Examples}\label{s:ex}

\subsection{Courant algebroids with a flat connection}

The first example goes back to Merker in~\cite{Mer09}.  Start with a
 Courant algebroid $(E_1,\dia_1,\rho_1)$ and assume that $\nabla$ is a metric connection 
on a pseudo Euclidean vector bundle $(V,\<~,~\>)$ over the same manifold.  If, in addition, $\conn_{\D f}s=0$
 for all smooth functions $f\in\smooth(M)$, then the curvature $R$, defined as
 $$ R(\psi_1,\psi_2)v = \conn_{\psi_1}\conn_{\psi_2}v -\conn_{\psi_2}\conn_{\psi_1}v - \conn_{\psi_1\dia\psi_2}v
 $$  is an element of $\Gamma(\wedge^2E_1\otimes\mathfrak{o}(V))$.  We require this to vanish.
We can endow $E_2=V$ with the trivial Courant bracket and trivial anchor.
Furthermore,  we assume that the $E_2$-connection on $E_1$ is
trivial.  Then,
\begin{equation}\label{PCour} 
(\psi\oplus v)\dia(\psi'\oplus v') = \big((\psi\dia_1\psi') +\tfrac12\D\<v,v'\>+\oo(v,v')\big) 
\oplus \big(\conn_\psi v'-\conn_{\psi'} v\big) 
.\end{equation}

This Courant algebroid plays an important role in the study of
port-Hamiltonian systems  \cite{Mer09}.
Let $E\to M$ be a  vector bundle  endowed 
with a  flat connection $\nabla$.  The port-Hamiltonian
system can be   described by a Dirac structure $D\subset CM\oplus V$,
 where $CM:=TM\oplus T^*M$ is the standard Courant algebroid, $V:=E\oplus E^*$ 
the trivial Courant algebroid with vanishing bracket and anchor, and $CM\oplus V$ their matched pair as explained above.  
An interesting family of Dirac structures that does not-necessarily project to Dirac structures on $CM$ 
or $V$ arises from a 2-form $\omega\in\Omega^2(M)$ and a so-called port map, which is a bundle map 
$A\:T^*M\to E$.  The Dirac structure is now the graph of the bundle map 
\[ TM\oplus E^*
\xto{\begin{pmatrix}
\omega^\#& -(A\circ\omega^\#)^* \\ A\circ\omega^\#&0 
\end{pmatrix}} 
T^*M\oplus E .\] 
The integrability conditions are $\ud\omega=0$ and $\ud_\oplus(A\circ\omega^\#)=0$, 
where $\ud_\oplus$ is the Lie algebroid differential of the sum Lie algebroid
$TM\oplus E$  of the matched pair of Lie algebroids $(TM, E)$.

Conversely,  given a bivector field $\pi\in\sections{\wedge^2TM}$ 
and a port map $A: T^*M\to E$, we can consider the graph of
the bundle map $\begin{pmatrix}\pi & -A^*\\ A & 0\end{pmatrix}: T^*M\oplus E\to 
T^*M\oplus E$.  This is always isotropic.
It   is integrable iff $[\pi,\pi]=0$, $[\pi,A]_\oplus=0$, and
 $[A,A]_\oplus=0$, where $[~,~]_\oplus$ is the Schouten bracket of the sum
Lie algebroid of the  matched pair $(TM,  E)$.

\subsection{Complex manifolds}\label{ss:cmfd}
Let $X$ be a complex manifold. Its tangent bundle $T_X$ is a holomorphic vector bundle. 
Consider the almost complex structure $j\:T_X\to T_X$.  Since $j^2=-\id$, the complexified tangent bundle $T_X\otimes\C$ 
decomposes as the direct sum of $T^{0,1}_X$ and $T^{1,0}_X$.  Let $\pr\zo\:T_X\otimes\C\to T\zo_X$ and 
$\pr\oz\:T_X\otimes\C\to T\oz_X$ denote the canonical projections.  The complex vector bundles $T_X$ and $T_X\oz$ 
are canonically identified with one another by the bundle map $\thalf(\id-\sqrt{-1} j):T_X\to T_X\oz$. 

\begin{defn} A \emph{complex Courant algebroid} is a complex vector bundle $E\to M$ endowed 
with a symmetric nondegenerate $\CC$-bilinear form $\ip{.}{.}$ on the fibers of $E$ with values in $\CC$, 
a $\CC$-bilinear Dorfman bracket $\db{}{}$ on the space of sections $\sections{E}$ 
and an anchor map $\rho:E\to TM\otimes\CC$ satisfying relations 
\eqref{Jacobi}, \eqref{Leibniz}, \eqref{nSkew}, and \eqref{adInvar}, 
where $\D:\smooth(M;\CC)\to\sections{E}$ is defined by $\<\D f,\phi\>=\rho^*(\ud f) \phi$, 
$\forall\phi\in\sections{E}, f\in\smooth(M;\CC)$
\end{defn}

The complexified tangent bundle $T_X\otimes\CC$ of a complex manifold $X$ is a smooth complex Lie algebroid. 
As a vector bundle, it is the direct sum of $T_X\zo$ and $T_X\oz$, which are both Lie subalgebroids. 

The Lie bracket of (complex) vector fields induces a $T_X\oz$-module structure on $T_X\zo$: 
\beq{horse} \rep_X Y=\przo \lie{X}{Y} \qquad (X\in\XX\oz, Y\in\XX\zo) ,\eeq
and a $T_X\zo$-module structure on $T_X\oz$: 
\beq{deer} \rep_Y X=\proz \lie{Y}{X} \qquad (X\in\XX\oz, Y\in\XX\zo) .\eeq
The flatness of these connections is a byproduct of the integrability of $j$.
We use the same symbol $\rep$ to denote the induced connections on the dual spaces: 
\begin{gather} \rep_X \beta=\przo (\mathcal{L}_X \beta) \qquad (X\in\XX\oz, \beta\in\OO\zo) ,\label{squirrel} \\ 
\rep_Y \alpha=\proz (\mathcal{L}_Y \alpha) \qquad (Y\in\XX\zo, \alpha\in\OO\oz) .\label{frog}
\end{gather}

As in Example~\ref{twisted},  associated to
 each $H^{3,0}\in\Omega^{3,0}(X)$
 such that $\partial H^{3,0}=0$, there is a complex twisted
 Courant algebroid structure on $C\oz_X=T\oz_X\oplus(T\oz_X)^*$.
  Moreover, if two $(3,0)$-forms $H^{3,0}$ and $H'^{3,0}$ are $\partial$-cohomologous, then the associated twisted
 Courant algebroid structures on $C\oz_X$ are isomorphic.
Similarly, associated to each $H^{0, 3}\in\Omega^{0, 3}(X)$ 
 such that $\bar{\partial} H^{0, 3}=0$, there is a complex twisted
 Courant algebroid structure on 
$C\zo_X=T_X\zo\oplus(T_X\zo)^*$.

\begin{prop} 
Let $H=H^{3,0}+H^{2, 1}+H^{1, 2}+H^{0, 3}\in \Omega^3(X)\otimes\CC$,
where $H^{i, j}\in \Omega^{i, j}(X)$, be a closed $3$-form.
Let $(C^{1,0}_X)_{H^{3,0}}$ be the complex
 Courant algebroid structure on $C\oz_X=T\oz_X\oplus(T\oz_X)^*$
twisted by $H^{3,0}$, and $(C^{0, 1}_X)_{H^{0, 3}}$ be the complex 
 Courant algebroid structure on
$C\zo_X=T_X\zo\oplus(T_X\zo)^*$ twisted by $H^{0, 3}$.
Then $(C^{1,0}_X)_{H^{3,0}}$ and $(C^{0, 1}_X)_{H^{0, 3}}$ 
form a matched pair of Courant algebroids, with
connections given by
\begin{align}
\rconn_{X\oplus\alpha}(Y\oplus \beta)
  =&\, \nabla^o_X Y \oplus \nabla^o_X \beta
    +H^{1, 2}(X,Y, \cdot) \label{plane} \\
  \lconn_{Y\oplus\beta}(X\oplus \alpha)
  =&\, \nabla^o_Y X \oplus \nabla^o_Y \alpha
    +H^{2, 1}(Y,X, \cdot) \label{car} 
\end{align} 
for all $X\in\XX^{1,0}$, $\alpha\in\OO^{1,0}$, $Y\in\XX^{0,1}$.
The resulting matched sum Courant algebroid is isomorphic to
the standard complex Courant algebroid $(T_X\oplus T_X^*)\otimes \CC$
twisted by $H$.
\end{prop}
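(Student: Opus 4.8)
The plan is to deduce everything at once from the single fact that the $H$-twisted standard complex Courant algebroid $E:=((T_X\oplus T_X^*)\otimes\CC)_H$ is already a Courant algebroid (the complexification of Example~\ref{twisted}), by exhibiting $C^{1,0}_X$ and $C^{0,1}_X$ as a pair of complementary Lagrangian subbundles of $E$ and running the constructions of Section~\ref{s:mp} in reverse. First I would fix the identifications: as a complex vector bundle $(T_X\oplus T_X^*)\otimes\CC=(T^{1,0}_X\oplus\Omega^{1,0})\oplus(T^{0,1}_X\oplus\Omega^{0,1})$, and regrouping the two tangent summands together and the two cotangent summands together gives a canonical isomorphism with $C^{1,0}_X\oplus C^{0,1}_X$, where $\Omega^{1,0}\cong(T^{1,0}_X)^*$ and $\Omega^{0,1}\cong(T^{0,1}_X)^*$. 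Since $\Omega^{1,0}$ annihilates $T^{0,1}_X$ and $\Omega^{0,1}$ annihilates $T^{1,0}_X$, the pairing \eqref{inner} restricts to \eqref{bull} with $(C^{1,0}_X)^\perp=C^{0,1}_X$, and the anchor \eqref{anchor} restricts to \eqref{bear}. Finally, for a complex $3$-form $H=H^{3,0}+H^{2,1}+H^{1,2}+H^{0,3}$, separating the condition $\ud H=0$ by bidegree gives in particular $\partial H^{3,0}=0$ and $\bar\partial H^{0,3}=0$, so the twisted Courant algebroids $(C^{1,0}_X)_{H^{3,0}}$ and $(C^{0,1}_X)_{H^{0,3}}$ in the statement are exactly those produced by applying Example~\ref{twisted} to the Lie algebroids $T^{1,0}_X$ and $T^{0,1}_X$ with the closed forms $H^{3,0}$ and $H^{0,3}$.

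Next I would check that the Courant algebroid structures and the connections which $E$ induces on these two Lagrangian subbundles, in the sense of Section~\ref{s:mp}, are precisely the ones in the statement; this is a bidegree computation starting from the Dorfman bracket \eqref{bracket}. For $a=X_1\oplus\xi_1$ and $b=X_2\oplus\xi_2$ sections of $C^{1,0}_X$, integrability of $T^{1,0}_X$ keeps $[X_1,X_2]$ of type $(1,0)$; the type-$(1,0)$ parts of $\mathcal{L}_{X_1}\xi_2$ and of $\ins_{X_2}\ud\xi_1$ are the Lie derivative and the differential of the Lie algebroid $T^{1,0}_X$ (that is, $\partial$ on forms of type $(\bullet,0)$); and of the four bidegree components of $\ins_{X_1\wedge X_2}H$ only $\ins_{X_1\wedge X_2}H^{3,0}$ is of type $(1,0)$, the rest being of type $(0,1)$ or zero. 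Hence $\pr_1(\db{\inj_1 a}{\inj_1 b})$ equals the $H^{3,0}$-twisted Dorfman bracket on $C^{1,0}_X$, and symmetrically $\pr_2(\db{\inj_2\alpha}{\inj_2\beta})$ equals the $H^{0,3}$-twisted bracket on $C^{0,1}_X$; in particular these projected brackets are genuine Courant brackets, so $E=E_1\oplus E_2$ with $E_1=C^{1,0}_X$, $E_2=C^{0,1}_X$ does fall under the setup of Section~\ref{s:mp}. For the induced connection \eqref{bird} I would compute $\pr_2(\db{X\oplus\alpha}{Y\oplus\beta})$ with $X\in\XX^{1,0}$, $\alpha\in\OO^{1,0}$, $Y\in\XX^{0,1}$, $\beta\in\OO^{0,1}$: the tangent part is the type-$(0,1)$ part of $[X,Y]$, which is $\nabla^o_X Y$ as in \eqref{horse}; the term $\ins_Y\ud\alpha$ is of type $(1,0)$ and so drops out; the type-$(0,1)$ part of $\mathcal{L}_X\beta$ is the dual connection $\nabla^o_X\beta$ of \eqref{squirrel}; and the type-$(0,1)$ part of $\ins_{X\wedge Y}H$ is $\ins_Y\ins_X H^{1,2}=H^{1,2}(X,Y,\cdot)$. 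This is exactly \eqref{plane}, and \eqref{fish} reproduces \eqref{car} by the mirror-image computation. The forms $\oo$ and $\w$ are then determined by \eqref{donkey} and \eqref{shark} (they turn out to be a $\bar\partial$-type, respectively $\partial$-type, expression plus $H^{2,1}(X_1,X_2,\cdot)$, respectively $H^{1,2}(Y_1,Y_2,\cdot)$, but their precise shape is not needed below). By the derivation of \eqref{fullBracket} in Section~\ref{s:mp}, the Dorfman bracket of $E$ is then recovered from the two twisted Courant algebroids together with the connections \eqref{plane}, \eqref{car} via formula \eqref{fullBracket}.

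To conclude, I would invoke Theorem~\ref{glass} for these two Courant algebroids and these two connections. Its hypotheses hold: the connections \eqref{plane}, \eqref{car} preserve the fibrewise metrics --- equations \eqref{cow} and \eqref{calf}, a short check using that $\nabla^o$ is compatible with the duality pairings of $T^{1,0}_X$ with $(T^{1,0}_X)^*$ and of $T^{0,1}_X$ with $(T^{0,1}_X)^*$, together with the skew-symmetry of $H^{1,2}$ and $H^{2,1}$ --- and putting the tangent argument equal to zero in \eqref{plane}, \eqref{car} gives $\rconn_{\D_1 f}\beta=0$ and $\lconn_{\D_2 f}b=0$ (equivalently, Lemma~\ref{l:quark}, now that we know we are in the setup of Section~\ref{s:mp}). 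Theorem~\ref{glass} then says that the bracket \eqref{fullBracket} assembled from this data --- which, by the previous paragraph, is the $H$-twisted Dorfman bracket on $(T_X\oplus T_X^*)\otimes\CC$ --- satisfies the Jacobi identity \eqref{Jacobi} if and only if the group of properties \eqref{derofBr1}--\eqref{new4Y} holds; since the $H$-twisted bracket does satisfy \eqref{Jacobi}, all of these properties hold, and by Definition~\ref{pair} the pair $(C^{1,0}_X)_{H^{3,0}}$, $(C^{0,1}_X)_{H^{0,3}}$ with the connections \eqref{plane}, \eqref{car} is a matched pair. Its matched sum has inner product \eqref{bull}, anchor \eqref{bear} and Dorfman bracket \eqref{fullBracket}; since these coincide, respectively, with \eqref{inner}, \eqref{anchor} and --- by the identification above --- \eqref{bracket}, the matched sum is, under the canonical regrouping isomorphism, the standard complex Courant algebroid $(T_X\oplus T_X^*)\otimes\CC$ twisted by $H$, as claimed. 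The only genuine work is the bidegree bookkeeping of the second paragraph --- tracking which $(p,q)$-component of each term in \eqref{bracket} survives each projection and matching the survivors with $\nabla^o$, its dual, and the components $H^{3,0},H^{2,1},H^{1,2},H^{0,3}$ --- everything else being formal and already furnished by Section~\ref{s:mp} and Theorem~\ref{glass}.
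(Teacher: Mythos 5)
Your proof is correct and follows essentially the same route as the paper: both start from the $H$-twisted standard complex Courant algebroid on $(T_X\oplus T_X^*)\otimes\CC$, decompose it into the complementary Lagrangian subbundles $C^{1,0}_X$ and $C^{0,1}_X$, and check by bidegree bookkeeping that the induced brackets and connections are the twisted ones and \eqref{plane}, \eqref{car}, so that the matched-pair conditions follow from the Jacobi identity of the ambient twisted algebroid via Theorem~\ref{glass}. The paper compresses the bidegree computation into ``straightforward computations show\dots''; you have simply written those computations out, correctly.
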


\begin{proof} The Courant algebroid $(C\oz_X)_{H^{3,0}}$ is the twisted
complex Courant algebroid $(T\oz_X \oplus {T\oz_X}^*)_{H^{3,0}}$.
Therefore the 3-form $H^{3,0}$ must be closed under $\partial$. 
This is true since the $\Omega^{(4,0)}$-component of $\ud H$ vanishes. 
The analog considerations are true for $(C\zo_X)_{H^{0,3}}$.

It is clear that $(T_X\oplus T^*_X)\otimes\C$ can be twisted by $H$.  
Straightforward computations show that this induces the twisted standard brackets 
on $C\oz_X$ and $C\zo_X$ respectively. Also the connections are induced by this Courant bracket.
\end{proof}

\subsection{Holomorphic Courant algebroids}\label{ss:holCour}
Let $X$ be a complex manifold.  We denote by $C\oz_X=T_X\oz\oplus(T_X\oz)^*$ 
and $C\zo_X=T_X\zo\oplus(T_X\zo)^*$ the  standard Courant algebroid.

\begin{defn} 
A \emph{holomorphic Courant algebroid} consists of a holomorphic vector bundle $E$ over a complex manifold $X$, 
with sheaf of holomorphic sections $\shs{E}$, endowed with a fiberwise $\CC$-valued inner product 
$\ip{~}{~}$  inducing a homomorphism of sheaves of $\hFF$-modules $\shs{E}\otimes_{\hFF}\shs{E}\xto{\ip{~}{~}}\hFF$, 
a bundle map $E\to T_X$ inducing a homomorphism of sheaves of $\hFF$-modules $\shs{E}\xto{\rho}\hXX$, 
where $\hXX$ denotes the sheaf of holomorphic sections of $T_X$, 
and a homomorphism of sheaves of $\CC$-modules $\shs{E}\otimes_{\CC}\shs{E}\xto{\db{}{}}\shs{E}$ satisfying relations 
\eqref{Jacobi}, \eqref{Leibniz}, \eqref{nSkew}, and \eqref{adInvar},
where $\D=\rho^*\circ\partial:\hFF\to\shs{E}$ and $\phi,\phi_1,\phi_2,\phi'\in\shs{E}$, $f\in\hFF$.
\end{defn} 

Holomorphic Courant algebroids were also studied by
Gualtieri, and we refer to~\cite{Gua10} for more details. 
\begin{lem}[Theorem~2.6.26 in~\cite{Huy05}]\label{wind}
Let $E$ be a complex vector bundle over a complex manifold $X$, and let $\shs{E}$ be 
a sheaf of $\hFF$-modules of sections of $E\to X$ such that, for each $x\in X$, 
there exists an open neighborhood $U\in X$ with $\sections{U;E}=\cinf{U,\CC}\cdot\shs{E}(U)$. 
Then the following assertions are equivalent. 
\begin{enumerate}
\item The vector bundle $E$ is holomorphic with sheaf of holomorphic sections $\shs{E}$. 
\item There exists a (unique) flat $T_X\zo$-connection $\nabla$ on $E\to X$ such that 
\[ \shs{E}(U)=\{\sigma\in\sections{U;E} \text{ s.t. } \conn_Y\sigma=0,\;\forall Y\in\XX\zo(X) \} .\]
\end{enumerate} 
\end{lem}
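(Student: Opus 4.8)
The plan is to prove Lemma~\ref{wind} (the Newlander--Nirenberg-type
characterization of holomorphic structures in terms of flat
$T_X^{0,1}$-connections) by establishing each implication
separately. Throughout, we may work locally, since both assertions are
local in nature and the sheaf $\shs{E}$ is assumed to generate
$\sections{U;E}$ over $\cinf{U,\CC}$ on small enough opens.

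First I would prove $(1)\Rightarrow(2)$. Given a holomorphic structure
with holomorphic sections $\shs{E}$, choose a local holomorphic frame
$e_1,\dots,e_r$ of $E$ over a polydisc $U$. Define $\nabla$ on this
frame by declaring $\conn_Y e_i = 0$ for all $i$ and all
$Y\in\XX^{0,1}(U)$, and extend by the Leibniz rule~\eqref{cat} (and
$\cinf{U,\CC}$-linearity~\eqref{dog} in the $T_X^{0,1}$-slot):
for $\sigma=\sum_i f^i e_i$ set $\conn_Y\sigma=\sum_i (Yf^i) e_i$.
One checks that this is independent of the chosen holomorphic frame:
two holomorphic frames differ by a holomorphic (hence
$\bar\partial$-closed) $\GL_r$-valued transition function, so the
operators agree on the overlap. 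Flatness is immediate because
$[Y_1,Y_2]\in\XX^{0,1}$ and $\conn_{Y_j}e_i=0$ force the curvature to
kill every frame section, and it is $\cinf{}$-bilinear. Finally, a
local section $\sigma=\sum f^i e_i$ is holomorphic iff each $f^i$ is
holomorphic iff $Yf^i=0$ for all $Y\in\XX^{0,1}$ iff
$\conn_Y\sigma=0$, which is exactly the stated description of
$\shs{E}(U)$; uniqueness of $\nabla$ follows since any such connection
must annihilate a basis of $\shs{E}(U)$ and is then determined by the
Leibniz rule.

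Next, $(2)\Rightarrow(1)$ is the substantive direction and the main
obstacle: one must produce enough $\nabla$-flat local sections to serve
as a holomorphic frame, i.e.\ solve the equation $\conn_Y\sigma=0$
for all $Y\in\XX^{0,1}$ with prescribed value at a point. This is
precisely where flatness is used: the $T_X^{0,1}$-connection $\nabla$,
being flat, endows $E$ with the structure of a module over the Lie
algebroid $T_X^{0,1}$, and the equation $\conn_\bullet\sigma=0$ is an
overdetermined system whose integrability (Frobenius-type) condition is
exactly $R^\nabla=0$. Concretely, writing $\nabla=\bar\partial +
A^{0,1}$ in a smooth frame, the curvature condition is the
Maurer--Cartan equation $\bar\partial A^{0,1}+\tfrac12[A^{0,1},A^{0,1}]
=0$, and the classical Newlander--Nirenberg / Koszul--Malgrange
argument (elliptic regularity for the $\bar\partial$-operator on a
polydisc) yields a local smooth gauge transformation $g$ with
$g^{-1}(\bar\partial+A^{0,1})g=\bar\partial$; the columns of $g$ are
the desired $\nabla$-flat sections. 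Transporting the transition
functions of $E$ through these flat frames makes them
$\bar\partial$-closed, hence holomorphic, so $E$ acquires a
holomorphic structure; and by construction its holomorphic sections
are the $\nabla$-flat ones, which by hypothesis are exactly $\shs{E}$.
Since this is Theorem~2.6.26 of~\cite{Huy05}, I would present the
$(1)\Rightarrow(2)$ direction in detail and merely indicate that
$(2)\Rightarrow(1)$ rests on the Koszul--Malgrange integrability
theorem, citing~\cite{Huy05} for the analytic core.

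The one technical point worth isolating is the compatibility of the
sheaf hypothesis with the connection: the assumption
$\sections{U;E}=\cinf{U,\CC}\cdot\shs{E}(U)$ guarantees that
$\shs{E}(U)$ is nonzero and spans, so that in $(1)\Rightarrow(2)$ a
holomorphic frame exists locally, and in $(2)\Rightarrow(1)$ the
flat sections produced by Koszul--Malgrange indeed generate the given
sheaf rather than some smaller one; this is what pins down $\nabla$
uniquely and makes the two assertions genuinely equivalent rather than
merely related.
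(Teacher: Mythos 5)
Your proof is correct and follows the standard route: the paper itself gives no proof of this lemma, citing Theorem~2.6.26 of~\cite{Huy05}, and your argument --- defining $\nabla$ as the Dolbeault operator annihilating holomorphic frames for $(1)\Rightarrow(2)$, and invoking the Koszul--Malgrange integrability theorem (local gauge-fixing of a flat $(0,1)$-connection via the Maurer--Cartan equation) for $(2)\Rightarrow(1)$ --- is exactly the argument underlying that citation. Your remark on how the spanning hypothesis $\sections{U;E}=\cinf{U,\CC}\cdot\shs{E}(U)$ pins down $\nabla$ uniquely and identifies the flat sections with $\shs{E}$ is a worthwhile addition that the bare citation leaves implicit.
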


\begin{lem}\label{dream}
Let $E\to X$ be a holomorphic vector bundle with $\shs{E}$ as sheaf of holomorphic sections, 
and $\nabla$ the corresponding flat $T_X\zo$-connection on $E$.  Let $\ip{~}{~}$ be a 
smoothly varying $\CC$-valued fiberwise  symmetric nondegenerate
 $\CC$-bilinear form on  $E$. The following assertions are equivalent: 
\begin{enumerate}
\item The inner product $\ip{.}{.}$ induces a homomorphism of sheaves of $\hFF$-modules 
$\shs{E}\otimes_{\hFF}\shs{E}\to\hFF$. 
\item For all $\phi,\psi\in\sections{E}$ and $Y\in\XX\zo(X)$, we have 
\[ Y\ip{\phi}{\psi}=\ip{\conn_Y\phi}{\psi}+\ip{\phi}{\conn_Y\psi} .\] 
\end{enumerate}
\end{lem}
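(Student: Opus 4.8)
The plan is to prove the equivalence by passing through the characterization of holomorphic sections afforded by Lemma~\ref{wind}. Write $\shs{E}(U)=\{\sigma\in\sections{U;E}\text{ s.t. }\conn_Y\sigma=0,\ \forall Y\in\XX\zo(X)\}$, so that holomorphic sections are exactly the $\nabla$-flat ones; locally, every smooth section is a $\cinf{U,\CC}$-linear combination of such flat sections.

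For the implication $(2)\Rightarrow(1)$, I would argue that if $\phi,\psi\in\shs{E}(U)$ are holomorphic, then $\conn_Y\phi=0$ and $\conn_Y\psi=0$ for all $Y\in\XX\zo$, so the hypothesis~(2) gives $Y\ip{\phi}{\psi}=0$ for all $Y\in\XX\zo(X)$. A smooth function $f$ on $U$ with $Yf=0$ for all antiholomorphic $Y$ is holomorphic (this is the Cauchy--Riemann condition, the scalar case of Lemma~\ref{wind} with $E$ trivial). Hence $\ip{\phi}{\psi}\in\hFF(U)$, and by $\hFF$-bilinearity of the (smooth) pairing this shows that $\ip{~}{~}$ maps $\shs{E}\otimes_{\hFF}\shs{E}$ into $\hFF$; since $\shs{E}$ is locally finitely generated over $\hFF$ it suffices to check this on generators, which is exactly what we did.

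For $(1)\Rightarrow(2)$, since the claimed identity $Y\ip{\phi}{\psi}=\ip{\conn_Y\phi}{\psi}+\ip{\phi}{\conn_Y\psi}$ is $\cinf{U,\CC}$-linear and a derivation in each of $\phi$ and $\psi$ (using~\eqref{cat} for $\nabla$ and the Leibniz rule for the pairing), it is enough to verify it when $\phi,\psi$ are holomorphic local sections and $Y$ is an antiholomorphic vector field. But then $\conn_Y\phi=\conn_Y\psi=0$ by Lemma~\ref{wind}, so the right-hand side vanishes, while the left-hand side $Y\ip{\phi}{\psi}$ also vanishes because $\ip{\phi}{\psi}\in\hFF(U)$ by hypothesis~(1) and holomorphic functions are annihilated by antiholomorphic vector fields. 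Thus both sides agree on a local frame of $\shs{E}$, and by the tensoriality/Leibniz reduction they agree for all smooth sections.

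The main point to be careful about — rather than a deep obstacle — is the reduction step: one must check that both sides of the identity in~(2) transform the same way under multiplying $\phi$ or $\psi$ by a smooth function, so that agreement on a holomorphic frame propagates to all smooth sections. Concretely, replacing $\phi$ by $g\phi$ with $g\in\cinf{U,\CC}$ produces the extra term $(Yg)\ip{\phi}{\psi}$ on the left and $(Yg)\ip{\phi}{\psi}$ on the right (from $\conn_Y(g\phi)=(Yg)\phi+g\conn_Y\phi$), so the discrepancy between the two sides is genuinely $\cinf{U,\CC}$-bilinear in $(\phi,\psi)$ and hence vanishes identically once it vanishes on a generating set. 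The only other thing to invoke explicitly is the scalar Cauchy--Riemann fact that $Yf=0$ for all $Y\in\XX\zo$ forces $f$ holomorphic, which is the rank-one instance of Lemma~\ref{wind}.
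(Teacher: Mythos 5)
Your argument is correct: the paper states Lemma~\ref{dream} without proof, and your route --- reducing via Lemma~\ref{wind} to holomorphic frames, using the Cauchy--Riemann characterization $Yf=0$ for all $Y\in\XX\zo$ iff $f\in\hFF$, and checking that the defect of identity~(2) is $\smooth$-bilinear in $(\phi,\psi)$ (and $\smooth$-linear in $Y$) so that vanishing on a holomorphic generating set suffices --- is exactly the intended argument. No gaps.
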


\begin{prop}\label{child}
Let $E\to X$ be a holomorphic vector bundle with $\shs{E}$ as the sheaf of holomorphic sections, 
and $\nabla$ the corresponding flat $T_X\zo$-connection on $E$. 
Let $\rho$ be a homomorphism of (complex) vector bundles from $E$ to $T_X$. 
The following assertions are equivalent:
\begin{enumerate}
\item The homomorphism $\rho$ induces a homomorphism of sheaves of $\hFF$-modules $\shs{E}\to\hXX$. 
\item The homomorphism $\rho\oz:E\to T_X\oz$ obtained from $\rho$ by identifying $T_X$ to $T_X\oz$ 
satisfies the relation \begin{equation}\label{zzzb} \rho\oz(\conn_Y \phi)=\proz\lie{Y}{\rho\oz\phi} , 
\quad \forall Y\in\sections{T_X\zo},\;\phi\in\sections{E\oz} .\end{equation}
\end{enumerate}
\end{prop}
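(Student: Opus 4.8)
The plan is to recognize that \eqref{zzzb} says exactly that the bundle map $\rho\oz\colon E\to T\oz_X$ intertwines two operators of $\bar\partial$-type. On the source, this is the flat $T\zo_X$-connection $\nabla$ furnished by Lemma~\ref{wind}, whose flat local sections are by definition the holomorphic sections of $E$. On the target, it is the flat $T\zo_X$-connection $\rep$ of \eqref{deer}; this $\rep$ is flat by the integrability of $j$ and has the holomorphic coordinate vector fields $\partial/\partial z^i$ among its flat sections, so by the uniqueness clause of Lemma~\ref{wind} it is precisely the operator defining the holomorphic structure of $T_X\cong T\oz_X$, and hence its flat local sections are exactly the holomorphic vector fields. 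Since $\rep_Y(\rho\oz\phi)=\proz\lie{Y}{\rho\oz\phi}$ by the very definition of $\rep$, relation \eqref{zzzb} reads $\rho\oz(\conn_Y\phi)=\rep_Y(\rho\oz\phi)$ for all $Y\in\XX\zo$; so both assertions express the compatibility of $\rho$ with the holomorphic structures, and it remains to pass between the two formulations.

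First I would observe that the difference of the two sides of \eqref{zzzb} is tensorial in $\phi\in\sections{E}$ and in $Y\in\XX\zo$: it is $\smooth(M;\CC)$-linear in $Y$ by \eqref{dog} applied to $\nabla$ and to $\rep$, and $\smooth(M;\CC)$-linear in $\phi$ by \eqref{cat} applied to $\nabla$ and to $\rep$, the two Leibniz terms $(Yf)\rho\oz\phi$ cancelling. Therefore \eqref{zzzb} holds for all $\phi$ and $Y$ if and only if it holds when $\phi$ runs through a local frame of $E$ and $Y$ through a local frame of $T\zo_X$. The hypothesis of Lemma~\ref{wind} lets one choose for $\phi$ a local \emph{holomorphic} frame $(e_a)$ of $E$, along which $\conn_Y e_a=0$, and then \eqref{zzzb} collapses to the single requirement $\rep_Y(\rho\oz e_a)=0$ for all $a$ and $Y$.

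Finally I would read off the equivalence. The relation \eqref{zzzb} holds if and only if $\rep_Y(\rho\oz e_a)=0$ for every index $a$ and every $Y\in\XX\zo$; this in turn is equivalent to each $\rho\oz(e_a)$ being a $\rep$-flat, hence holomorphic, section of $T\oz_X$, that is, to each $\rho(e_a)$ being a holomorphic vector field. Since $(e_a)$ is a frame of $E$ over $U$ and every $\sigma\in\shs{E}(U)$ is an $\hFF$-linear combination of the $e_a$, this last condition is equivalent to $\rho\big(\shs{E}(U)\big)\subseteq\hXX(U)$ for every such $U$, which is precisely assertion~(1) --- the $\hFF$-linearity of the resulting morphism of sheaves being automatic for a bundle map. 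This proves (1)$\Leftrightarrow$(2). I do not expect a genuine obstacle; the one point requiring care is the bookkeeping of the two holomorphic structures --- one must be certain that ``holomorphic'' for a section of $T\oz_X$ means exactly ``$\rep$-flat'' (which is where the uniqueness statement in Lemma~\ref{wind} enters) and that the canonical isomorphism $T_X\cong T\oz_X$ coming from the complex structure is one of \emph{holomorphic} vector bundles, so that no analytic subtlety intervenes.
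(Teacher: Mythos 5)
Your proof is correct. The paper actually states Proposition~\ref{child} without proof, so there is nothing to compare against; your argument --- identifying $\proz\lie{Y}{\cdot}$ with the $\bar\partial$-operator $\rep$ of $T\oz_X$, checking that the defect $\rho\oz(\conn_Y\phi)-\rep_Y(\rho\oz\phi)$ is $\smooth$-bilinear so that everything reduces to a local holomorphic frame, and then matching ``$\rep$-flat'' with ``holomorphic'' via Lemma~\ref{wind} --- is exactly the standard argument the authors evidently had in mind.
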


\begin{lem}  Let $(E,\ip{.}{.},\rho,\db{.}{.})$ be a holomorphic Courant algebroid over a complex manifold $X$. 
Denote the sheaf of holomorphic sections of the underlying holomorphic vector bundle by $\shs{E}$ 
and the corresponding $T_X\zo$-connection on $E$ by $\nabla$. 
Then, there exists a unique complex Courant algebroid structure on $E\to X$ with inner product $\ip{~}{~}$ and anchor map 
$\rho\oz=\thalf(\id-i j)\circ\rho\:E\to T_X\oz\subset T_X\otimes\CC$, 
the restriction of whose Dorfman bracket to $\shs{E}$ coincides with $\db{}{}$.
Such a complex Courant algebroid is denoted by $E^{1,0}$.
\end{lem}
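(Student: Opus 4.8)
The plan is to construct the complex Courant algebroid structure on the full smooth vector bundle $E$ by extending the holomorphic Dorfman bracket $\db{}{}$ from $\shs{E}$ to all smooth sections via the axioms, then check that the result is well-defined and satisfies the Courant algebroid axioms. Concretely, since locally every smooth section of $E$ is a $\cinf{U,\CC}$-linear combination of holomorphic sections (this is part of the hypothesis via Lemma~\ref{wind}), the Leibniz rule~\eqref{Leibniz} and axiom~\eqref{nSkew} force the value of the extended bracket: writing $\phi=\sum_i f_i s_i$ and $\psi=\sum_j g_j t_j$ with $s_i,t_j\in\shs{E}(U)$ and $f_i,g_j\in\cinf{U,\CC}$, we must have
\[ \db{\phi}{\psi} = \sum_{i,j} f_i g_j \,\db{s_i}{t_j} + \sum_{i,j} f_i \big(\rho\oz(s_i) g_j\big)\, t_j - \sum_{i,j} g_j \big(\rho\oz(t_j) f_i\big)\, s_i + \sum_{i,j} g_j\, \rho\oz{}^*\big(\ud f_i\big)\, \langle s_i, t_j\rangle , \]
where the last term comes from combining~\eqref{Leibniz} with~\eqref{nSkew} to handle the failure of skew-symmetry. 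First I would take this as the \emph{definition} of the extended bracket and show it is independent of the chosen local presentations; the natural way is to verify that the expression is tensorial in the appropriate sense, i.e.\ that replacing $s_i$ by $f s_i'$ for a holomorphic function $f$ (hence $\conn_Y f = 0$) produces a consistent answer, using that the holomorphic bracket $\db{}{}$ already satisfies~\eqref{Leibniz} and~\eqref{nSkew} on $\shs{E}$ together with the compatibility relations in Lemmas~\ref{dream} and Proposition~\ref{child} relating $\conn$, $\ip{}{}$, and $\rho\oz$.

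Next I would verify the four Courant axioms~\eqref{Jacobi}--\eqref{adInvar} for the extended data $(E,\ip{}{},\rho\oz,\db{}{})$. The key observation is that each axiom is, by the Leibniz-type identities, an $\cinf{}$-tensorial identity in each slot \emph{once it is known on holomorphic sections}: more precisely, both sides of~\eqref{Jacobi}, \eqref{nSkew}, and~\eqref{adInvar} differ by expressions that are $\cinf{}$-multilinear in the arguments, so it suffices to check them when all arguments lie in $\shs{E}(U)$, where they hold by hypothesis (that $\db{}{}$ makes $\shs{E}$ a holomorphic Courant algebroid), plus the fact that on holomorphic sections $\rho\oz$ restricts to the holomorphic anchor and $\ip{}{}$ restricts to the holomorphic pairing. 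For axiom~\eqref{Leibniz} and the defining relation $\<\D f,\phi\>=\rho\oz{}^*(\ud f)\phi$ for the extended $\D$, one checks directly from the formula above. I would also record that the restriction of the extended bracket to $\shs{E}$ is $\db{}{}$ by construction, and that $\conn$ (being the $(0,1)$-part of a Courant-type operation) is recovered, so the structure is the asserted $E^{1,0}$.

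The main obstacle I expect is the well-definedness / independence-of-presentation step: one must show the right-hand side of the boxed formula is unchanged under all the relations among $f_i s_i$, which amounts to checking that the correction terms involving $\rho\oz$ and $\ip{}{}$ exactly compensate the Leibniz and non-skewness defects, and here one genuinely needs the holomorphicity compatibilities — namely that $\rho\oz(\conn_Y\phi)=\proz\lie{Y}{\rho\oz\phi}$ (Proposition~\ref{child}), that $Y\ip{\phi}{\psi}=\ip{\conn_Y\phi}{\psi}+\ip{\phi}{\conn_Y\psi}$ (Lemma~\ref{dream}), and that $\conn$ is flat with $\db{s}{t}$ holomorphic when $s,t$ are. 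Equivalently, and perhaps more cleanly, one can phrase the whole argument sheaf-theoretically: the smooth sections are $\cinf{}\otimes_{\ssmooth_X}\shs{E}$ (with $\ssmooth_X$ the sheaf of holomorphic functions), and a Dorfman bracket on a holomorphic Courant algebroid extends uniquely to such a base change precisely because the axioms are compatible with the extension of scalars along $\ssmooth_X\hookrightarrow\cinf{}$; uniqueness is then automatic since any two extensions agree on the generating subsheaf $\shs{E}$ and both satisfy~\eqref{Leibniz} and~\eqref{nSkew}. I would present the explicit formula as the construction and the sheaf-theoretic remark as the conceptual reason uniqueness holds.
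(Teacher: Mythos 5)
Your proposal is correct and follows essentially the same route as the paper: the extended bracket is forced by \eqref{Leibniz} and \eqref{nSkew} to be given by exactly the formula you write (the paper states it for a single product $f\cdot\phi$, $g\cdot\psi$ with $\phi,\psi\in\shs{E}$), and the work is in checking independence of the local presentation and then the four axioms. Your uniqueness argument (any two extensions agree on the generating subsheaf $\shs{E}$ and are both determined by \eqref{Leibniz} and \eqref{nSkew}) is in fact cleaner than the paper's appeal to density of holomorphic sections in smooth ones.
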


\begin{proof}  To prove the uniqueness, assume that there are two complex Courant algebroid structures 
on the smooth vector bundle $E\to X$ whose anchor map and inner products coincide. 
Moreover the Dorfman brackets coincide on $\shs{E}$, the sheaf of holomorphic sections. 
Then the Dorfman brackets coincide on all of $\Gamma(E)$, because the holomorphic sections 
over a coordinate neighborhood of $X$ are dense in the set of smooth sections.

To argue for the existence, note that we want the Leibniz rules
\begin{align*}
  \phi\dia(g\cdot\psi) &= \rho\oz(\phi)[g]\cdot\psi +g\cdot(\phi\dia\psi) \\
  (f\cdot\phi)\dia\psi &= -\rho\oz(\psi)[f]\cdot\phi +f(\phi\dia\psi) +\<\phi,\psi\>\cdot\rho^{(1,0)*}\ud f
\end{align*} for $\phi,\psi\in\Gamma(E)$ and $f,g\in\smooth(X)$.  Therefore 
we define the Dorfman bracket of the smooth sections $f\cdot\phi$ and $g\cdot\psi$,  $\forall \phi,\psi\in\shs{E}$,  as
\begin{equation*}
(f\cdot\phi)\dia(g\cdot\psi) = f\rho\oz(\phi)[g]\cdot\psi+fg\cdot(\phi\dia\psi) 
-g\rho\oz(\psi)[f]\cdot\phi +\<\phi,\psi\>g\cdot\rho^{(1,0)*}\ud f \;.
\end{equation*}
First we should argue that this extension is consistent with the bracket defined for holomorphic sections.  
Assume therefore that $f\cdot\phi$ is again holomorphic, where $f\in\smooth(X)$ and $\phi\in\shs{E}$.  
But this implies that $f$ is holomorphic on the open set where $\phi$ is not zero.  
Therefore on this open set the extension says
\begin{align*}
  f\rho\oz(\phi)[g]\cdot\psi &+fg\cdot(\phi\dia\psi) -g\rho\oz(\psi)[f]\cdot\phi +\<\phi,\psi\>g\cdot\rho^{(1,0)*}\ud f \\
  &= f\rho(\phi)[g]\cdot\psi +fg\cdot(\phi\dia\psi) -g\rho(\psi)[f]\cdot\phi +\<\phi,\psi\>g\cdot\rho^*\partial f
\end{align*}  
where we have used the fact that the two anchor maps coincide for holomorphic sections 
and $\rho^{(1,0)*}\ud f=\rho^*\partial f$.  Therefore the term is just $(f\cdot\phi)\dia(g\cdot\psi)$.

Since the first derivatives of $f$ and $g$ are continuous, 
the above relation also holds on the closure of the open domain where both $\phi$ and $\psi$ 
do not vanish. But on the complement, which is open, this term is just $0$, 
because at least one of the terms $\phi$ or $\psi$ as well as the bracket $\phi\dia\psi$ is also $0$.

 It follows from  a straightforward but tedious computation that 
 this bracket fulfills the four axioms \eqref{Jacobi}--\eqref{adInvar}.  
As an example, we show the proof of the axiom \eqref{nSkew}:
\begin{align*}
 (f\cdot\phi)\dia(f\cdot\phi) &= f^2\cdot(\phi\dia\phi) +\<\phi,\phi\>f\cdot\rho^{(1,0)*}\ud f \\
  &= \tfrac12\rho^*\partial\<\phi,\phi\> +\<\phi,\phi\>f\cdot\rho^{(1,0)*}\ud f  \\
  &=\tfrac12\rho^{(1,0)*}\ud\<f\cdot\phi,f\cdot\phi\>
\end{align*}  
where, in the last step,  we used again the fact that $\rho^*\partial=\rho^{(1,0)*}\ud$ when applied to holomorphic functions.
\end{proof}

Define a flat $C_X\zo$-connection $\nablaright$ on $E$ by 
\beq{squalor} \rconn_{Y\oplus\eta} e=\conn_Y e ,\eeq
and a flat $E$-connection $\nablaleft$ on $C_X\zo$ by 
\beq{filth} \lconn_{e} (Y\oplus\eta)=\nabla^o_{\rho\oz(e)}(Y\oplus\eta)=\przo\big(\mathcal{L}_{\rho\oz(e)}(Y\oplus\eta)\big) 
=\przo\big(\lie{\rho\oz(e)}{Y}\oplus\mathcal{L}_{\rho\oz(e)}\eta\big) ,\eeq
where $\przo$ denotes the projection of $(T_X\oplus T_X^*)\otimes\CC$ onto $T_X\zo\oplus(T_X\zo)^*$.

We can write the identity \eqref{zzzb} as
\begin{equation}\label{zzzbb} \rho(\rconn_Y\phi)-\lconn_\phi Y = [Y,\rho(\phi)] \;.
\end{equation}

Thus we have  the following

\begin{prop}  Let $(E,\ip{.}{.},\rho,\db{}{})$ be a holomorphic Courant algebroid over a complex manifold $X$.  
Denote the sheaf of holomorphic sections by $\shs{E}$ and the corresponding flat $T\zo_X$-connection on $E$ by $\nabla$.
Then the two complex Courant algebroids $C_X\zo$ and $E\oz$, together with the 
flat connections $\nablaleft$ and $\nablaright$ given by \eqref{squalor} and \eqref{filth}, 
constitute a matched pair of complex Courant algebroids, 
which we call the companion matched pair of the holomorphic Courant algebroid $E$.
\end{prop}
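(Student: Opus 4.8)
Set $E_1:=C_X\zo$ and $E_2:=E\oz$, so that (see \eqref{squalor} and \eqref{filth}) $\nablaright$ is an $E_1$-connection on $E_2$ and $\nablaleft$ an $E_2$-connection on $E_1$, matching the conventions of Theorem~\ref{glass}. The plan is to check the hypotheses of (the complex counterpart of) that theorem one at a time. Two structural facts carry most of the weight. First, both connections factor through the anchors: $\rconn_{Y\oplus\eta}=\conn_Y=\conn_{\rho_1(Y\oplus\eta)}$ and $\lconn_e=\nabla^o_{\rho\oz(e)}$. Second, a holomorphic section $\phi\in\shs E$ is annihilated by $\conn$, i.e.\ $\conn_Y\phi=0$ for all $Y\in\XX\zo$ (Lemma~\ref{wind}), and its anchor $\rho\oz(\phi)$ is a holomorphic vector field (read off from \eqref{zzzb} using $\conn_Y\phi=0$; cf.\ Proposition~\ref{child}).

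I would first dispatch the conditions not involving the Jacobiator. Metric compatibility of $\nablaright$ with $\ip{~}{~}_2$ is exactly Lemma~\ref{dream}(2), since $\rconn_a=\conn_{\rho_1(a)}$ and $\rho_1(a)\in\XX\zo$. Metric compatibility of $\nablaleft$ with the canonical pairing of $C_X\zo$ is the one-line observation that $\przo\mathcal{L}_Z$ preserves that pairing for $Z\in\XX\oz$ (using the integrability of $j$, i.e.\ that $\mathcal{L}_Z$ maps $\OO\zo$ to itself). Because $\rho_1\circ\D_1=0$ and $\rho\oz\circ\D_2=0$ (the identity $\rho\circ\D=0$, which follows from $(\D f)\dia\phi=0$ and \eqref{Leibniz}), the factoring through anchors yields $\rconn_{\D_1 f}\beta=\conn_0\beta=0$ and $\lconn_{\D_2 f}b=\nabla^o_0 b=0$. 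Flatness of $\nablaright$ follows from flatness of $\nabla$ (Lemma~\ref{wind}) and \eqref{rhomor}; flatness of $\nablaleft$ follows, again by \eqref{rhomor}, from the flatness of \eqref{squirrel}, \eqref{frog} and their $T_X\zo$-counterparts, which is a byproduct of the integrability of $j$. Hence $\Rright=\Rleft=0$ and \eqref{curvCompat} holds. Finally \eqref{new4X} and \eqref{new4Y} are automatic: from \eqref{donkey} and $\lconn_{\D_2 f}=0$ one gets $\rho\oz\circ\oo=0$, so $\lconn_{\oo(a_1,a_2)}=\nabla^o_0=0$; symmetrically, \eqref{shark} and $\rconn_{\D_1 f}=0$ give $\rho_1\circ\w=0$, so $\rconn_{\w(\a_1,\a_2)}=\conn_0=0$.

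The substance is \eqref{derofBr1} and \eqref{derofBr2}. I would first verify that, once both sides are combined, each of these is $\smooth(X;\CC)$-multilinear in all its arguments: the apparent failures of linearity --- from $\oo(fa,b)=f\oo(a,b)+\thalf\ip{a}{b}_1\D_2 f$, from $\w(f\a,\b)=f\w(\a,\b)+\thalf\ip{\a}{\b}_2\D_1 f$, and from the non-tensoriality of $\dia_1$, $\dia_2$ in the first slot --- cancel against one another by \eqref{donkey}, \eqref{shark}, \eqref{cow}, \eqref{calf}. By multilinearity, and since $\shs E$ is dense in $\sections E$ over coordinate charts, it suffices to verify \eqref{derofBr1} and \eqref{derofBr2} when the $E$-arguments are holomorphic sections $\phi,\phi'\in\shs E$. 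For such sections: $\rconn_\bullet\phi=0$; $\phi\dia_2\phi'$ is again holomorphic; $\ip{\phi}{\phi'}_2$ is holomorphic, hence $\D_1\ip{\phi}{\phi'}_2=0$ (a holomorphic function is annihilated by $\XX\zo$); $\w(\phi,\phi')=0$ by \eqref{shark}; and a short pairing computation using \eqref{shark}, Lemma~\ref{dream} and the nondegeneracy of $\ip{~}{~}_1$ gives $\w(\phi,\beta)+\thalf\D_1\ip{\phi}{\beta}_2=0$ for every $\beta\in\sections E$. Feeding these facts in, the right-hand sides of both \eqref{derofBr1} and \eqref{derofBr2} vanish, as does the left-hand side of \eqref{derofBr2} (all of whose terms are covariant derivatives of holomorphic sections), while the left-hand side of \eqref{derofBr1} collapses to $\nabla^o_V(a_1\dia_1 a_2)-(\nabla^o_V a_1)\dia_1 a_2-a_1\dia_1(\nabla^o_V a_2)$ with $V=\rho\oz(\phi)$ holomorphic.

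This last expression is where I expect the main obstacle. The point is that $\nabla^o_V$, for $V$ a holomorphic vector field, is an infinitesimal automorphism of the standard Courant algebroid $C_X\zo=T_X\zo\oplus(T_X\zo)^*$: the flow of $V$ consists of biholomorphisms, which preserve $j$ and hence the Lie algebroid $T_X\zo$ together with its dual, so they act by Courant algebroid automorphisms of $C_X\zo$, with generator precisely $\przo\mathcal{L}_V=\nabla^o_V$. Being a derivation of the Courant bracket, $\nabla^o_V$ then annihilates the displayed expression, which settles \eqref{derofBr1}. Granting this identification and the routine (but lengthy, as in the proof of Theorem~\ref{glass}) multilinearity bookkeeping, all the hypotheses of Theorem~\ref{glass} are met, so $(C_X\zo,E\oz,\nablaleft,\nablaright)$ is a \mpca, the companion matched pair of $E$.
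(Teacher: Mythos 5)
Your proposal is correct and, at bottom, follows the same strategy as the paper: verify the hypotheses of Theorem~\ref{glass}, dispose of the non-Jacobiator conditions using the fact that both connections factor through the anchors, and reduce \eqref{derofBr1}--\eqref{derofBr2} to distinguished sections via $\smooth$-multilinearity. The one genuine divergence is in that final reduction. The paper specializes \emph{both} factors --- holomorphic sections of $E$ \emph{and} antiholomorphic sections of $C_X^{0,1}$ --- whereupon every term of \eqref{derofBr1} and \eqref{derofBr2} vanishes outright (for instance $\nabla^o_V$ kills antiholomorphic sections when $V$ is holomorphic), so the entire burden falls on the function-linearity bookkeeping. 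You specialize only the $E$-factor; this makes the right-hand sides vanish cleanly via your identity $\w(\phi,\beta)+\tfrac12\D_1\ip{\phi}{\beta}_2=0$, but leaves the residual claim that $\nabla^o_V$ is a derivation of the standard Courant bracket on $C_X^{0,1}$ for $V$ holomorphic --- a substantive step the paper's route never has to confront. Your flow-of-biholomorphisms argument for it is sound (after splitting $V$ into the real parts of $V$ and $iV$ and noting $\mathcal{L}_{\bar V}$ already preserves $C_X^{0,1}$).

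One correction to your bookkeeping. The $\smooth$-linearity of \eqref{derofBr2} in $\a_2$ (and likewise of \eqref{derofBr1} in $a_2$) does \emph{not} follow from \eqref{donkey}, \eqref{shark}, \eqref{cow}, \eqref{calf} alone: replacing $\a_2$ by $f\a_2$ leaves over the term
\begin{equation*}
\bigl([\rho(a),\rho(\a_1)]-\rho(\rconn_{a}\a_1)+\rho(\lconn_{\a_1}a)\bigr)[f]\cdot\a_2 ,
\end{equation*}
which vanishes precisely by the anchor compatibility \eqref{zzzbb}, i.e.\ Proposition~\ref{child}. (Linearity in the slot you actually need for \eqref{derofBr1}, namely $\a$, does follow from \eqref{donkey} and \eqref{cow}; but for \eqref{derofBr2} you specialize both $\a_1$ and $\a_2$, so \eqref{zzzbb} is indispensable.) This identity is the real glue between the two structures and is exactly the cancellation the paper displays in its proof; since you already invoke \eqref{zzzb} to see that holomorphic sections have holomorphic anchors, the omission is one of attribution rather than of substance.
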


\begin{proof} 
We check that $C_X\zo\oplus E^{1,0}$ is a Courant algebroid. 
The axioms \eqref{Leibniz}--\eqref{adInvar} are straightforward computations 
by using the fact that the connections preserve the inner product on each Courant algebroid.

It remains to check that the Jacobi identity holds.
By Proposition~\ref{glass}, it suffices to check that the five properties 
\eqref{derofBr1}--\eqref{new4Y} hold.  All equations are trivially satisfied when $a_i\in\shs{E}$ 
and $\alpha_i\in\bar{\Theta}\oplus\bar\OO_X\subset \Gamma(C\zo_X)$. 
So we are left to check that multiplying the $a_i$ (or the $\alpha_i$ respectively) with smooth functions 
we get the same additional terms on each side of the equations. 
This will be demonstrated for \eqref{derofBr1}. The left hand side of \eqref{derofBr1} is
\begin{equation*}
\LHS(a_2) := \lconn_\alpha(a_1\dia a_2) - (\lconn_\alpha a_1)\dia a_2 
-a_1\dia(\lconn_\alpha a_2) -\lconn_{\rconn_{a_2}\alpha} a_1 +\lconn_{\rconn_{a_1}\alpha} a_2 
.\end{equation*}
Then 
\[ \LHS(f\cdot a_2)=f\cdot \LHS(a_2)+\left(\left[\rho(\alpha),\rho(a_1)\right]
+\rho(\rconn_{a_1}\alpha)-\rho(\lconn_\alpha a_1)\right)[f]\cdot a_2 \] 
and the second term vanishes due to \eqref{zzzbb}. 
For the right hand side 
\[ \RHS(a_2) := -\w(\alpha,\oo(a_1,a_2)+\thalf\ud\<a_1,a_2\>) -\thalf\D\<\alpha, \oo(a_1,a_2)
+\thalf \ud\<a_1,a_2\>\> ,\] 
we have 
\[ \RHS(f\cdot a_2) = f\cdot \RHS(a_2) .\]
Thus the right hand side is also $\smooth(X)$-linear in $a_2$.  Analog considerations result in coinciding terms 
for both sides of \eqref{derofBr1} when multiplying $a_1$ or $\alpha$ by a smooth function.
\end{proof}

In fact, the converse is also true.

\begin{prop} 
Let $X$ be  a complex manifold. Assume that
$(C\zo_X,B)$ is a   matched pair of complex Courant algebroids 
 such that the anchor of $B$ takes values in $T_X\oz$,
both connections $\nablaleft$ and $\nablaright$ are flat with
the $B$-connection $\nablaleft$ on $C\zo_X$ 
being given by $\lconn_{e} (Y\oplus\eta)=\nabla^o_{\rho(e)}(Y\oplus\eta)$.
Then there is a unique holomorphic Courant algebroid $E$ such that $B=E^{1, 0}$.
\end{prop}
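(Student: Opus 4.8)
The plan is to reconstruct the holomorphic bundle underlying $E$ from the flat connection $\nablaright$, and then to transfer the Courant algebroid structure of $B$ to its sheaf of holomorphic sections. First I would restrict $\nablaright$ along the inclusion $\XX\zo(X)\hookrightarrow\sections{C\zo_X}$, $Y\mapsto Y\oplus0$, and set $\nabla_Y e:=\rconn_{Y\oplus0}e$. Since $\rconn$ is $\smooth(X)$-linear in its first slot, since $\rconn_{\DD_1 f}\beta=0$, and since the $\DD_1 f$ span the cotangent summand $(T\zo_X)^*$ of $C\zo_X$ fibrewise, one has in fact $\rconn_{Y\oplus\eta}e=\nabla_Y e$ for every $\eta$, so $\nabla$ recovers all of $\nablaright$ in the $\XX\zo$-directions. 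Because the Dorfman bracket of $C\zo_X$ restricts on $T\zo_X\oplus 0$ to the Lie bracket of vector fields, the curvature of $\nabla$ equals $\Rright(Y_1\oplus0,Y_2\oplus0)$, which vanishes since $\nablaright$ is flat; hence $\nabla$ is a flat $T\zo_X$-connection on $B$. Lemma~\ref{wind} then endows $B$ with the structure of a holomorphic vector bundle $E$ whose sheaf of holomorphic sections is $\shs{E}=\{e\in\sections{B}:\nabla_Y e=0\ \forall Y\in\XX\zo(X)\}$.

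Next I would check that the inner product and anchor descend to holomorphic data. As $(C\zo_X,B)$ is a matched pair, the connections preserve the fibrewise metrics, and restricting \eqref{calf} to $a=Y\oplus0$ gives $Y\ip{\phi}{\psi}_2=\ip{\nabla_Y\phi}{\psi}_2+\ip{\phi}{\nabla_Y\psi}_2$; by Lemma~\ref{dream} the inner product induces a homomorphism of sheaves of $\hFF$-modules $\shs{E}\otimes_{\hFF}\shs{E}\to\hFF$. For the anchor, applying the anchor of the matched sum $C\zo_X\oplus B$ to the identity \eqref{parrot} with $a=Y\oplus0$, and using \eqref{rhomor} together with $\rho(Y\oplus0)=Y$ and the prescribed form $\lconn_e(Y\oplus0)=\nabla^o_{\rho(e)}(Y\oplus0)$, one obtains exactly $\rho(\nabla_Y\phi)=\proz\lie{Y}{\rho(\phi)}$, i.e.\ the relation \eqref{zzzbb} and hence condition (2) of Proposition~\ref{child}; here the hypothesis that the anchor of $B$ is $T\oz_X$-valued is what lets us regard it as the map $\rho^{(1,0)}$ attached to a bundle map $E\to T_X$. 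Proposition~\ref{child} then yields a homomorphism of sheaves $\shs{E}\to\hXX$.

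The heart of the argument is that the Dorfman bracket $\dia_2$ of $B$ restricts to a map $\shs{E}\otimes_\CC\shs{E}\to\shs{E}$. Given $\phi,\psi\in\shs{E}$, I would show $\nabla_Y(\phi\dia_2\psi)=0$ for all $Y\in\XX\zo(X)$ by specialising the compatibility equation \eqref{derofBr2} to $a=Y\oplus0$, $\alpha_1=\phi$, $\alpha_2=\psi$. On the left side the terms $(\nabla_Y\phi)\dia_2\psi$ and $\phi\dia_2(\nabla_Y\psi)$ vanish because $\phi,\psi$ are $\nabla$-flat, while $\rconn_{\lconn_\psi(Y\oplus0)}\phi$ and $\rconn_{\lconn_\phi(Y\oplus0)}\psi$ vanish because $\lconn_\psi(Y\oplus0)=\nabla^o_{\rho(\psi)}(Y\oplus0)$ again lies in $T\zo_X\oplus 0$ by construction of $\nabla^o$, so that $\rconn$ of it along a holomorphic section is again a $\nabla$-derivative, hence zero; the left side thus collapses to $\nabla_Y(\phi\dia_2\psi)$. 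On the right side, $\w(\phi,\psi)=0$: by \eqref{shark}, $\ip{c}{\w(\phi,\psi)}_1$ is built from $\rconn_c\phi$ and $\rconn_c\psi$ with $c$ ranging over $\sections{C\zo_X}$, and these vanish by the first paragraph, so nondegeneracy of the pairing on $C\zo_X$ forces $\w(\phi,\psi)=0$; likewise $\DD_1\ip{\phi}{\psi}_2=0$ since $\ip{\phi}{\psi}_2$ is holomorphic and $\DD_1$ annihilates holomorphic functions. Hence the right side of \eqref{derofBr2} equals $-\oo(Y\oplus0,0)-\thalf\DD_2\ip{Y\oplus0}{0}=0$, and we conclude $\nabla_Y(\phi\dia_2\psi)=0$.

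Finally, $E$ equipped with $\ip{~}{~}$, $\rho$, and the restriction of $\dia_2$ to $\shs{E}$ satisfies axioms \eqref{Jacobi}--\eqref{adInvar} on $\shs{E}$ because $B$ satisfies them on all of $\sections{B}$, and $\DD:=\rho^*\circ\partial\colon\hFF\to\shs{E}$ agrees on $\hFF$ with the operator $\DD_B$ of the complex Courant algebroid $B$ by the same computation used in the lemma defining $E^{1,0}$ (namely $\rho^{(1,0)*}\ud=\rho^*\partial$ on holomorphic functions); so $E$ is a holomorphic Courant algebroid. By construction $E^{1,0}$ has the same underlying bundle, inner product and anchor as $B$, and its Dorfman bracket agrees with $\dia_2$ on the fibrewise dense subsheaf $\shs{E}$, whence $E^{1,0}=B$ by the uniqueness clause of that lemma. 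Uniqueness of $E$ is then immediate: the demand $E^{1,0}=B$ forces $\shs{E}$ to be the sections flat for $\nabla=\rconn_{(\,\cdot\,)\oplus0}$, hence forces the holomorphic structure, after which the holomorphic Courant data is simply the restriction of that of $B$. The one delicate point is the third paragraph --- recognising that the prescribed shape of $\nablaleft$, together with the flatness of $\nablaright$, makes precisely the obstructing quantities $\w(\phi,\psi)$ and $\DD_1\ip{\phi}{\psi}_2$ vanish on holomorphic sections; the remaining steps are bookkeeping with the structure equations of Theorem~\ref{glass}.
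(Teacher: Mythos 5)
Your proof is correct and follows essentially the same route as the paper: Lemma~\ref{wind} recovers the holomorphic bundle from the flat $T\zo_X$-connection extracted from $\nablaright$, Lemmas~\ref{dream} and Proposition~\ref{child} give the holomorphic inner product and anchor, and one of the derivation identities \eqref{derofBr1}/\eqref{derofBr2} forces the bracket of holomorphic sections to be $\nabla$-flat, hence holomorphic. You supply more detail than the paper does (notably the deduction of \eqref{zzzbb} from \eqref{parrot} and \eqref{rhomor}, and the explicit vanishing of $\w(\phi,\psi)$ and $\DD_1\ip{\phi}{\psi}$), but the underlying argument is the same.
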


\begin{proof} 
The flat $C\zo$-connection induces a flat $T\zo$-connection on $B$. 
Hence there is a holomorphic vector bundle $E\to X$ such that $E\oz=B$, 
according to Lemma~\ref{wind}.  
Since the connection $\nablaleft$ preserves the inner product on $B$ and is compatible 
with the anchor map \eqref{zzzb}, $E$ inherits a holomorphic inner product 
and a holomorphic anchor map.  It remains to check that the induced Dorfman bracket is holomorphic 
as well. If $a_1,a_2\in\shs{E}$ are two holomorphic sections of $E$ 
and $\alpha\in \bar{\Theta}_X\oplus\bar{\Omega}_X$ is an anti-holomorphic section of $C\oz_X$, 
then all terms of Equation~\eqref{derofBr1}, except the first one, vanish. 
But then the first one $\lconn_\alpha(a_1\dia a_2)$ also has to vanish. 
This shows that the Dorfman bracket of two holomorphic sections is itself holomorphic.
\end{proof}

\subsection{Flat regular Courant algebroid}\label{ss:flatReg}
A Courant algebroid $E$ is said to be \textbf{regular} if $F:=\rho(E)$ has constant rank, 
in which case $\rho(E)$ is an integrable distribution on the base manifold $M$ 
and $\ker\rho/(\ker\rho)\ortho$ is a bundle of quadratic Lie algebras over $M$. 
It was proved by Chen et al.~\cite{CSX09}
that the vector bundle underlying a regular Courant algebroid $E$ is isomorphic to
$\Fs\oplus \GG\oplus F$, where $F$ is the integrable subbundle $\rho(E)$ of $TM$ 
and $\GG$  the bundle $\ker\rho/(\ker\rho)^\perp$ of quadratic Lie algebras over $M$.
Thus we can confine ourselves to 
those Courant algebroid structures on $\fsgf$ whose anchor map is
\begin{equation} \label{Eqt:Standardrho} \rho(\xi_1+\gr_1+x_1)=x_1 ,\end{equation}
whose pseudo-metric is
\begin{equation} \label{Eqt:Standardip}
\ip{\xi_1+\gr_1+\fx_1}{\xi_2+\gr_2+\fx_2} = \duality{\xi_1}{\fx_2}
+\duality{\xi_2}{\fx_1}+\ipG{\gr_1}{\gr_2} 
,\end{equation}
and whose Dorfman bracket satisfies
\begin{equation} \label{Eqt:prGdb} \ProjectionTo{\GG} (\db{\gr_1}{\gr_2})=\lbG{\gr_1}{\gr_2} ,\end{equation}
where $\xi_1,\xi_2\in\Fs$, $\gr_1,\gr_2\in\GG$, and $\fx_1,\fx_2\in F$.
We call them \emph{standard} Courant algebroid structures on $\fsgf$.

\begin{thm}[\cite{CSX09}]
\label{Thm:StandardCourantStructure}
A Courant algebroid structure on $\fsgf$,
with pseudo-metric \eqref{Eqt:Standardip} and anchor map
\eqref{Eqt:Standardrho}, and satisfying \eqref{Eqt:prGdb}, is
completely determined by 
an $F$-connection $\nabla$ on $\GG$,
a bundle map $\Curvature:\wedge^2 F\to\GG$,
and a $3$-form $\HForm\in \sections{\wedge^3 \Fs}$
satisfying the compatibility conditions
\begin{gather}\label{ipGinvariant}  \ld{ x
}\ipG{\gr}{\gs}=\ipG{\conn_{x}{\gr}}{\gs}+\ipG{\gr}{\conn_{x}{\gs}},\\
\label{lbGinvariant}  \conn_{x}\lbG{\gr}{\gs}=
\lbG{\conn_x\gr}{\gs}+\lbG{\gr}{\conn_x\gs},\\
\label{dCurvautre0}
 \big(\conn_x\Curvature(y,z)-\Curvature(\lb{x}{y},z)\big) +c.p.=0,\\
\label{CurvatureConnectionDer} \conn_x\conn_y
\gr-\conn_y\conn_x \gr
-\conn_{\lb{x}{y}}\gr=\lbG{\Curvature(x,y)}{\gr}, \\
\label{dH}  \dF\HForm=\PontryaginCocycleR
\end{gather}
for all $x,y,z\in \sections{F}$ and $\gr,\gs\in\sections{\GG}$.
Here $\PontryaginCocycleR$ denotes the $4$-form on $F$ given by
\[ \PontryaginCocycleR(x_1,x_2,x_3,x_4) = \tfrac{1}{4} \sum_{\sigma\in S_4} \sgn(\sigma)
\ipG{\Curvature(x_{\sigma(1)},x_{\sigma(2)})}{\Curvature(x_{\sigma(3)},x_{\sigma(4)})}, \]
where $x_1,x_2,x_3,x_4\in F$.
\end{thm}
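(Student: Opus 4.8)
The plan is to recover the entire Dorfman bracket on $F^*\oplus\mathcal{G}\oplus F$ from the data $(\nabla,\mathcal{R},H)$, and then to run the converse. First I would pin down the operator $\D$: the defining relation $\langle\D f,\phi\rangle=\rho^*(\ud f)\phi$ together with the prescribed anchor \eqref{Eqt:Standardrho} and metric \eqref{Eqt:Standardip} forces $\D f$ to be the restriction of $\ud f$ to $F$, lying in $\Gamma(F^*)$. Since $\langle x,x\rangle=0$ for $x\in\Gamma(F)$ by \eqref{Eqt:Standardip}, axiom \eqref{nSkew} makes $\diamond$ skew-symmetric on $\Gamma(F)$, and \eqref{rhomor} shows that the $F$-component of $x_1\diamond x_2$ equals $[x_1,x_2]$. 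I then define $\mathcal{R}(x_1,x_2)$ to be the $\mathcal{G}$-component and $H(x_1,x_2,\cdot)$ the $F^*$-component of $x_1\diamond x_2$; using the Leibniz rule \eqref{Leibniz}, the polarization of \eqref{nSkew}, and ad-invariance \eqref{adInvar}, one checks that these are $\smooth(M)$-multilinear and totally skew, hence $\mathcal{R}\in\Gamma(\wedge^2F^*\otimes\mathcal{G})$ and $H\in\Gamma(\wedge^3F^*)$. Likewise $\nabla_x\mathbf{r}:=\mathrm{pr}_\mathcal{G}(x\diamond\mathbf{r})$ is an $F$-connection on $\mathcal{G}$ in the sense of \eqref{dog}--\eqref{cat}, where $\nabla_{fx}\mathbf{r}=f\nabla_x\mathbf{r}$ uses the left Leibniz identity $(f\phi)\diamond\psi=f(\phi\diamond\psi)-(\rho(\psi)f)\phi+\langle\phi,\psi\rangle\D f$, a formal consequence of \eqref{Leibniz} and \eqref{nSkew}.

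Next I would show that $(\nabla,\mathcal{R},H)$ determines the whole bracket. Ad-invariance gives $\langle x\diamond\mathbf{r},x'\rangle=-\langle\mathcal{R}(x,x'),\mathbf{r}\rangle_\mathcal{G}$, so the $F^*$-component of $x\diamond\mathbf{r}$ is fixed (its $F$-component vanishing by \eqref{rhomor}). Since $(\D f)\diamond\phi=0$ and $\phi\diamond\D g=\D(\rho(\phi)g)$ --- both formal consequences of the axioms --- the Leibniz rules express every bracket $\xi\diamond\phi$ and $\phi\diamond\xi$ with $\xi\in\Gamma(F^*)$ through $\rho$ and $\langle~,~\rangle$ alone; in particular $\xi\diamond\mathbf{r}=\mathbf{r}\diamond\xi=0$. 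Combining this with \eqref{Eqt:prGdb}, \eqref{nSkew}, and \eqref{adInvar} pins down $\mathbf{r}_1\diamond\mathbf{r}_2$ as well, whose $F^*$-component equals $\langle\mathbf{r}_2,\nabla_\cdot\mathbf{r}_1\rangle_\mathcal{G}$ once \eqref{ipGinvariant} is available. Assembling the pieces, the bracket is forced to equal one explicit formula in $(\nabla,\mathcal{R},H)$, which establishes the uniqueness half of the theorem.

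For the existence half, given $(\nabla,\mathcal{R},H)$ with $\nabla$ metric, I would define $\langle~,~\rangle$ and $\rho$ by \eqref{Eqt:Standardip}, \eqref{Eqt:Standardrho} and $\diamond$ by that explicit formula. Axioms \eqref{Leibniz} and \eqref{nSkew} hold by inspection, and \eqref{adInvar}, tested on triples of generators, reduces to \eqref{ipGinvariant} (all other components being automatic). The Jacobi identity \eqref{Jacobi} I would evaluate on triples of sections of $F^*$, $\mathcal{G}$, and $F$: any triple containing an $F^*$-entry reduces to the standard Courant computation (the bracket with $\Gamma(F^*)$ behaves as the bracket with exact $1$-forms, so no new condition appears); the all-$\mathcal{G}$ triple is the fibrewise Jacobi identity for $[~,~]_\mathcal{G}$; the $(F,\mathcal{G},\mathcal{G})$ triple yields \eqref{lbGinvariant}; the $(F,F,\mathcal{G})$ triple yields \eqref{CurvatureConnectionDer}; and the all-$F$ triple, according to whether one pairs the Jacobiator with $\Gamma(\mathcal{G})$ or $\Gamma(F)$, yields \eqref{dCurvautre0} and \eqref{dH}. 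Because the uniqueness statement shows this is the only candidate structure, the two constructions are mutually inverse, which is precisely the assertion of the theorem.

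The main obstacle is the bookkeeping in the all-$F$ case. Because the $F^*$-components of $x\diamond\mathbf{r}$ and of $\mathbf{r}_1\diamond\mathbf{r}_2$ are not zero but are governed by $\mathcal{R}$ and $\nabla$, the $F^*$-component of the Jacobiator on a triple $x_1,x_2,x_3\in\Gamma(F)$ picks up a term quadratic in $\mathcal{R}$ of the form $\langle\mathcal{R}(\cdot,\cdot),\mathcal{R}(\cdot,\cdot)\rangle_\mathcal{G}$, and one must verify that its totally skew part reorganizes exactly into the Pontryagin-type $4$-form on the right of \eqref{dH}, while the $H$-terms (involving $\mathcal{L}^F_{x_i}H$ and $\iota_{x_i}d_F H$) combine into $d_F H$. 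Keeping track of signs across the cyclic permutations is where essentially all the work lies; the remaining cases are routine once the left and right Leibniz identities and the relation $\phi\diamond\D g=\D(\rho(\phi)g)$ are in hand.
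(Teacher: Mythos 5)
This theorem is imported verbatim from \cite{CSX09} and the present paper supplies no proof of its own, so the only internal material to check against is the list of bracket formulas \eqref{FSGFx1x2}--\eqref{FSGFx1g2}; your outline is sound and is essentially the standard derivation of that reference: uniqueness by forcing each component of each bracket of generators from \eqref{Leibniz}--\eqref{adInvar}, \eqref{rhomor}, $(\D f)\diamond\phi=0$ and $\phi\diamond\D g=\D(\rho(\phi)g)$, and existence by reading off \eqref{ipGinvariant} from ad-invariance and \eqref{lbGinvariant}--\eqref{dH} from the Jacobiator evaluated component-by-component on triples of generators, exactly as you assign them. One normalization remark: with the pairing exactly as transcribed in \eqref{Eqt:Standardip}, your identity $\langle x\diamond\gr,x'\rangle=-\ipG{\Curvature(x,x')}{\gr}$ yields the $F^*$-component of $x\diamond\gr$ as $-\mathcal{Q}(x,\gr)$ rather than the $-2\mathcal{Q}(x,\gr)$ of \eqref{FSGFx1g2}; the factors of $2$ there and in \eqref{Eqt:definationofPForm} presuppose the convention $\tfrac12\big(\duality{\xi_1}{\fx_2}+\duality{\xi_2}{\fx_1}\big)$ of \cite{CSX09}, so the discrepancy lies in the paper's transcription of the metric, not in your argument.
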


The Dorfman bracket on  $E=\fsgf$ is then given by
\begin{align}
\label{FSGFx1x2}
\db{\fx_1}{\fx_2}&=\HForm(\fx_1,\fx_2,\_)  + \Curvature
(\fx_1,\fx_2)+\lb{\fx_1}{\fx_2}, \\\label{FSGFg1g2}
\db{\gr_1}{\gr_2}&=\PForm
(\gr_1,\gr_2)+\lbG{\gr_1}{\gr_2},\\\label{FSGFxi1g2}
\db{\xi_1}{\gr_2}&=\db{\gr_1}{\xi_2}=\db{\xi_1}{\xi_2}=0,\\\label{FSGFx1xi2}
\db{\fx_1}{\xi_2}&=\LieDer_{\fx_1}\xi_2, \\\label{FSGFxi1x2}
\db{\xi_1}{\fx_2}&=-\LieDer_{\fx_2}\xi_1+
\dF\duality{\xi_1}{\fx_2},\\\label{FSGFx1g2}
\db{\fx_1}{\gr_2}&=-\db{\gr_2}{\fx_1}=-2\mathcal{Q}(\fx_1,\gr_2)+
\conn_{\fx_1}\gr_2,
\end{align}
for all $\xi_1,\xi_2\in\sections{\Fs}$, $\gr_1,\gr_2\in\sections{\GG}$,
$\fx_1,\fx_2\in\sections{F}$.
Here $\dF:\cinf{M}\to\sections{F^*}$ denotes the leafwise de~Rham differential.
The maps $\PForm:\sections{\GG}\otimes\sections{\GG}\to\sections{\Fs}$ and
$\mathcal{Q}: \sections{F}\otimes\sections{\GG}\to
\sections{F^*}$ are defined, respectively, by the relation
\begin{equation} \label{Eqt:definationofPForm}
\duality{\PForm(\gr_1,\gr_2)}{y} = 2\ipG{\gr_2}{\conn_y\gr_1} 
\end{equation}
and
\begin{equation} \label{Eqt:definationofQ}
\duality{\mathcal{Q}(x,\gr)}{y} = \ipG{\gr}{\Curvature(x,y)} 
.\end{equation}

\begin{defn}
A standard Courant algebroid  $E=\fsgf$ is said to be flat
if $\PontryaginCocycleR$ vanishes.
\end{defn}

\begin{prop}
Let $E=\fsgf$ be a flat
standard Courant algebroid. Then $(F\oplus F^*)_{H}$ and $\GG$
form a matched pair of Courant algebroids, 
and  $E$ is isomorphic to  their matched sum, where
$(F\oplus F^*)_{H}$ denotes the twisted Courant algebroid
on $F\oplus F^*$ by the $3$-form $H$.  Here
the $(F\oplus F^*)_{H}$ connection on $\GG$ is given by
\beq{ray}\rconn_{x+\xi}\gr=\conn_x\gr \;,\eeq
while the $\GG$-connection on $(F\oplus F^*)_{H}$ is given by
\beq{manta} \lconn_r (X\oplus\xi) = 2\mathcal{Q}(X,r)\oplus0 \;. \eeq
\end{prop}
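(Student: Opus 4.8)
The plan is to realize both Courant algebroids as complementary, mutually orthogonal subbundles of the given $E=\fsgf$ and then extract the matched-pair structure by running the construction of Section~\ref{s:mp} backwards. I would set $E_1=\Fs\oplus F$ and $E_2=\GG$, viewed as the subbundles $\Fs\oplus 0\oplus F$ and $0\oplus\GG\oplus 0$ of $\fsgf$. From \eqref{Eqt:Standardip} it is immediate that $E_1$ and $E_2$ are orthogonal, that $\ip{~}{~}$ restricts non-degenerately to each, and hence that $E=E_1\oplus E_2$ with $E_1^\perp=E_2$; and from \eqref{Eqt:Standardrho} the induced anchor on $E_1$ is the inclusion $F\hookrightarrow TM$ and that on $E_2$ vanishes. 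So one is exactly in the setting of Section~\ref{s:mp}, and it remains to (i) identify the induced Courant algebroid structures on $E_1$ and $E_2$, (ii) observe that the matched-pair conditions are then automatic, and (iii) compute the induced connections \eqref{bird}, \eqref{fish} explicitly.

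For step (i): the bracket induced on $E_2=\GG$ is $\dbi{\gr_1}{\gr_2}{2}=\pr_2(\db{\gr_1}{\gr_2})=\lbG{\gr_1}{\gr_2}$ by \eqref{Eqt:prGdb} (equivalently \eqref{FSGFg1g2}), so $\GG$ with pairing $\ipG{~}{~}$, zero anchor, and bracket $\lbG{~}{~}$ is a Courant algebroid --- axioms \eqref{Jacobi}--\eqref{adInvar} collapse to the defining properties of a bundle of quadratic Lie algebras. The bracket induced on $E_1=\Fs\oplus F$ is obtained by applying $\pr_1$ to \eqref{FSGFx1x2}, \eqref{FSGFx1xi2}, \eqref{FSGFxi1x2} and $\db{\xi_1}{\xi_2}=0$ of \eqref{FSGFxi1g2}; the sole effect of $\pr_1$ is to delete the $\GG$-valued summand $\Curvature(\fx_1,\fx_2)$ of \eqref{FSGFx1x2}, and one is left with
\[ \dbi{(\fx_1+\xi_1)}{(\fx_2+\xi_2)}{1}=\lb{\fx_1}{\fx_2}+\LieDer_{\fx_1}\xi_2-\LieDer_{\fx_2}\xi_1+\dF\duality{\xi_1}{\fx_2}+H(\fx_1,\fx_2,\cdot) , \]
which is precisely the $H$-twisted standard Dorfman bracket on $F\oplus F^*$ (Example~\ref{twisted} with $TM$ replaced by the Lie algebroid $F$ and $\ud$ by the leafwise differential $\dF$). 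This is a Courant algebroid exactly when $H$ is $\dF$-closed, and $\dF H=\PontryaginCocycleR=0$ by \eqref{dH} and the flatness hypothesis. Thus $E_1=(F\oplus F^*)_H$ and $E_2=\GG$ are Courant algebroids over $M$.

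For step (ii): the structural results of Section~\ref{s:mp} apply verbatim to the triple $(E,E_1,E_2)$. The bracket of $E$ induces connections $\nablaright$ on $E_2$ and $\nablaleft$ on $E_1$ through \eqref{bird}, \eqref{fish}; these preserve the two fiberwise metrics by \eqref{cow}, \eqref{calf}; they satisfy $\rconn_{\DD_1 f}\beta=0$ and $\lconn_{\DD_2 f}b=0$ by Lemma~\ref{l:quark}; and the full bracket of $E$ is recovered from $(E_1,E_2,\nablaright,\nablaleft)$ via \eqref{fullBracket}. Since $E=\fsgf$ already satisfies the Jacobi identity \eqref{Jacobi} by Theorem~\ref{Thm:StandardCourantStructure}, the equivalence asserted in Theorem~\ref{glass} forces the five conditions \eqref{derofBr1}--\eqref{new4Y}; hence, by Definition~\ref{pair}, the pair $(E_1,E_2)$ together with these connections is a matched pair of Courant algebroids, and $E=\fsgf$ is tautologically isomorphic to its matched sum. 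For step (iii): from \eqref{bird}, \eqref{FSGFx1g2}, \eqref{FSGFxi1g2} one gets $\rconn_{\fx+\xi}\gr=\pr_2\big(\db{(\fx+\xi)}{\gr}\big)=\pr_2\big(-2\mathcal{Q}(\fx,\gr)+\conn_{\fx}\gr\big)=\conn_{\fx}\gr$, which is \eqref{ray}; and from \eqref{fish}, using $\db{\gr}{\fx}=-\db{\fx}{\gr}$ of \eqref{FSGFx1g2}, one gets $\lconn_{\gr}(\fx+\xi)=\pr_1\big(\db{\gr}{(\fx+\xi)}\big)=\pr_1\big(2\mathcal{Q}(\fx,\gr)-\conn_{\fx}\gr\big)=2\mathcal{Q}(\fx,\gr)\in\sections{\Fs}$, which is \eqref{manta}.

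\textbf{Main obstacle.} The only step carrying genuine content is (i) --- recognizing $\pr_1$ of the explicit bracket on $\Fs\oplus F$ as the $H$-twisted standard bracket and checking $\dF H=0$; step (ii) makes the five matched-pair identities free once $\fsgf$ is known to be a Courant algebroid, so I do not expect a real obstacle. Should one instead wish to verify \eqref{derofBr1}--\eqref{new4Y} directly from \eqref{ray}, \eqref{manta}, the crux would be the curvature compatibility \eqref{curvCompat}, which via \eqref{dolphin}, \eqref{wale} reduces to \eqref{CurvatureConnectionDer} together with the ad-invariance of $\ipG{~}{~}$; \eqref{new4Y} is then immediate because $\w$ turns out to be $\Fs$-valued, \eqref{new4X} is the full contraction of the Pontryagin $4$-form $\PontryaginCocycleR$ and so vanishes by flatness, and \eqref{derofBr1}, \eqref{derofBr2} repackage \eqref{lbGinvariant}, \eqref{dCurvautre0} and the compatibility of $\nabla$ with $\dF$. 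The one recurring nuisance in that direct route is keeping the factors of $\tfrac12$ in \eqref{pig}, \eqref{snake}, \eqref{fullBracket} aligned with those in \eqref{Eqt:definationofPForm}, \eqref{Eqt:definationofQ}.
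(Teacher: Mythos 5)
Your proposal is correct and follows essentially the same route as the paper: the genuine content in both is that flatness and \eqref{dH} give $\dF H=0$, that the projected brackets on $\Fs\oplus F$ and on $\GG$ are respectively the $H$-twisted standard bracket and the fiberwise Lie bracket, and that the induced connections read off from \eqref{FSGFx1g2} and \eqref{FSGFxi1g2} are \eqref{ray} and \eqref{manta}. The paper phrases this as building the matched sum and comparing it component by component with \eqref{FSGFx1x2}--\eqref{FSGFx1g2}, while you run the decomposition of Section~\ref{s:mp} in reverse and invoke the equivalence in Theorem~\ref{glass} to get the five matched-pair conditions from the known Jacobi identity of $E$ --- the same argument, with the implicit step of the paper made explicit.
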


\begin{proof} It follows from  the flatness of the Courant algebroid
that  the 3-form $H\in\OO^3_M(F)$ is $d_F$ closed. 
Therefore we can construct the twisted standard Courant algebroid $(F\oplus F^*)_H$.  
By comparing  each component of the Dorfman bracket of the sum Courant algebroid 
$\GG\oplus(F\oplus F^*)_H$ with that of the bracket \eqref{FSGFx1x2}--\eqref{FSGFx1g2},
we see that, for our choice \eqref{ray} and \eqref{manta}, both coincide.
\end{proof}

\section{Matched pairs of Dirac structures}\label{s:mpDirac}
Recall that a Dirac structure $D$ in a Courant algebroid $(E,\<.,.\>,\dia,\rho)$ 
with split signature is a maximal isotropic and integrable subbundle.

\begin{prop}
Given a matched pair $(E_1, E_2)$ of Courant algebroids of split signature and Dirac structures 
$D_1\subset E_1$ and $D_2\subset E_2$,
the direct sum $D_1\oplus D_2$ is a Dirac structure in the Courant algebroid
$E_1\oplus E_2$ iff $\lconn_\alpha a\in\sections{D_1}$ 
and $\rconn_a \a\in\sections{D_2}$ for all $\alpha\in\sections{D_2}$ and $a\in\sections{D_1}$.
\end{prop}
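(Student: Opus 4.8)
The plan is to reduce everything to the characterizing conditions of a Dirac structure: maximal isotropy and integrability. Maximal isotropy of $D_1\oplus D_2$ inside $E_1\oplus E_2$ is automatic and independent of the connections. Indeed, by the formula \eqref{bull} for the inner product of the matched sum, $\ip{a\oplus\alpha}{b\oplus\beta}=\ip{a}{b}_1+\ip{\alpha}{\beta}_2$, so $D_1\oplus D_2$ is isotropic because $D_1$ and $D_2$ are isotropic in $E_1$ and $E_2$ respectively; and a rank count shows $\rank(D_1\oplus D_2)=\rank D_1+\rank D_2=\thalf\rank E_1+\thalf\rank E_2=\thalf\rank(E_1\oplus E_2)$, since each $E_k$ has split signature. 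Hence $D_1\oplus D_2$ is maximal isotropic regardless of the hypothesis, and the content of the proposition lies entirely in integrability, i.e.\ in whether $\sections{D_1\oplus D_2}$ is closed under the Dorfman bracket \eqref{fullBracket}.

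Next I would expand $\db{(a\oplus\alpha)}{(b\oplus\beta)}$ for sections of $D_1\oplus D_2$ using \eqref{fullBracket} and read off when the result again lies in $\sections{D_1\oplus D_2}$. The $E_1$-component is
\[
\dbi{a}{b}{1}+\lconn_{\alpha}b-\lconn_{\beta}a+\w(\alpha,\beta)+\thalf\DD_1\ip{\alpha}{\beta}_2 ,
\]
and the $E_2$-component is the symmetric expression. Now $\dbi{a}{b}{1}\in\sections{D_1}$ because $D_1$ is a Dirac structure in $E_1$; the term $\thalf\DD_1\ip{\alpha}{\beta}_2$ vanishes on the nose, since $\alpha,\beta\in\sections{D_2}$ and $D_2$ is isotropic; and $\w(\alpha,\beta)\in\sections{D_1}$ by the defining relation \eqref{shark}: for any $c\in\sections{D_1}$ one has $\ip{c}{\w(\alpha,\beta)}_1=\thalf(\ip{\rconn_c\alpha}{\beta}_2-\ip{\alpha}{\rconn_c\beta}_2)$, and if the hypothesis $\rconn_c\alpha,\rconn_c\beta\in\sections{D_2}$ holds then the right side vanishes because $D_2$ is isotropic, so $\w(\alpha,\beta)$ is orthogonal to $D_1$; since $D_1$ is maximal isotropic in $E_1$, orthogonality to $D_1$ forces $\w(\alpha,\beta)\in\sections{D_1}$. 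Thus, granting the hypothesis, the $E_1$-component lies in $\sections{D_1}$ iff $\lconn_\alpha b$ and $\lconn_\beta a$ do; symmetrically, the $E_2$-component lies in $\sections{D_2}$ iff $\rconn_a\beta$ and $\rconn_b\alpha$ do. This gives the "if" direction once we note that the stated hypothesis supplies exactly $\lconn_\alpha a\in\sections{D_1}$ and $\rconn_a\alpha\in\sections{D_2}$; the analogous argument for $\oo$ using \eqref{donkey} handles its appearance.

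For the "only if" direction, I would exploit the fact that the mixed bracket $(a\oplus0)\dia(0\oplus\alpha)=-\lconn_\alpha a\oplus\rconn_a\alpha$ of \eqref{parrot} must land in $\sections{D_1\oplus D_2}$ whenever $a\in\sections{D_1}$ and $\alpha\in\sections{D_2}$; reading off the two components immediately yields $\lconn_\alpha a\in\sections{D_1}$ and $\rconn_a\alpha\in\sections{D_2}$, which is precisely the asserted condition. The main obstacle to watch is the bookkeeping around $\w$, $\oo$, and the $\DD_k$-terms: one must be careful that $\w(\alpha,\beta)$ and $\oo(a,b)$, which are defined via the inner-product pairings \eqref{shark} and \eqref{donkey} against \emph{all} sections of $E_1$ and $E_2$, do in fact take values in the subbundles $D_1$ and $D_2$ — and this is where maximal isotropy of $D_1$ in $E_1$ (resp.\ $D_2$ in $E_2$), not just isotropy, is used. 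Once that point is handled, both directions follow by matching components in \eqref{fullBracket}. Finally, by the last proposition before Section~\ref{s:mpDirac} — a matched pair of Dirac structures being a matched pair of Lie algebroids — one may additionally remark that $D_1\oplus D_2$, being an integrable maximal isotropic, carries the induced Lie algebroid structure, but this is not needed for the statement at hand.
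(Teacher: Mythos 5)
Your proof is correct and follows essentially the same route as the paper: maximal isotropy is automatic, and closure under the Dorfman bracket is checked by pairing the cross-components ($\oo$, $\w$, and the $\DD_k$-terms) against sections of $D_1$ and $D_2$ via \eqref{donkey} and \eqref{shark} and invoking maximal isotropy. Your write-up is in fact more complete than the paper's, which only sketches the case of two sections of $D_1$ and leaves the converse direction (immediate from \eqref{parrot}, as you note) implicit.
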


\begin{proof} 
It is obvious that $D_1\oplus D_2$ is maximal isotropic.
It remains to check that  the $E_2$ ($E_1$)-component of the bracket
of any two sections of $D_1$ is automatically in $D_2$ ($D_1$).
Indeed, we have
\[ \<0\oplus\alpha,(a\oplus0)\dia(b\oplus0)\> = \<\lconn_\alpha a,b\>_1 \;. \]
Since $D_1$ is isotropic, the RHS vanishes.
It thus follows from the maximal isotropy of $D_2$ that $(a\oplus0)\dia(b\oplus0)$ is in $D_2$.
\end{proof}

\begin{defn}
Let $(E_1,E_2)$ be a matched pair of Courant algebroids. 
A Dirac structure $D_1$ in $E_1$ and a Dirac structure $D_2$ in $E_2$ are said to form a matched pair of Dirac structures 
if their direct sum $D_1\oplus D_2$ is a Dirac structure in the matched sum $E_1\oplus E_2$. 
\end{defn}

\begin{cor} 
Let $D_1$ (resp.\ $D_2$) be a Dirac structure in a Courant algebroid $E_1$ (resp.\ $E_2$). 
If $(E_1,E_2)$ is a matched pair of Courant algebroids and $(D_1,D_2)$ is a matched pair of Dirac structures, 
then $(D_1,D_2)$ is a matched pair of Lie algebroids. 
\end{cor}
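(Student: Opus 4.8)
The plan is to use the converse half of Mokri's theorem (recalled in Section~\ref{s:prelim}): I would identify the Lie algebroid underlying the matched-sum Dirac structure $D_1\oplus D_2$ with the matched sum of $D_1$ and $D_2$ as Lie algebroids, so that $(D_1,D_2)$ is automatically a matched pair of Lie algebroids.

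First I recall that a Dirac structure in a Courant algebroid of split signature is itself a Lie algebroid: by \eqref{nSkew} the Dorfman bracket is skew-symmetric on its sections, and by \eqref{Jacobi}, \eqref{Leibniz} and \eqref{rhomor} it then satisfies the Lie algebroid axioms together with the restricted anchor. Thus $D_1$, $D_2$ and --- by the proposition preceding the definition of a matched pair of Dirac structures --- $D_1\oplus D_2$ are Lie algebroids, with brackets the restrictions of the Dorfman brackets of $E_1$, $E_2$ and $E_1\oplus E_2$ respectively. By that same proposition the hypothesis that $(D_1,D_2)$ is a matched pair of Dirac structures says exactly that $\lconn_\alpha a\in\sections{D_1}$ and $\rconn_a\alpha\in\sections{D_2}$ for all $\alpha\in\sections{D_2}$, $a\in\sections{D_1}$; hence the connections $\nablaleft$, $\nablaright$ of the matched pair $(E_1,E_2)$ restrict to a $D_2$-connection on $D_1$ and a $D_1$-connection on $D_2$. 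Then I would specialize the bracket \eqref{fullBracket} and the anchor \eqref{bear} to sections of $D_1\oplus D_2$: the isotropy of $D_1$ and $D_2$ annihilates the terms $\tfrac12\DD_k\ip{\cdot}{\cdot}_k$, and --- provided one knows that $\oo(a,b)=0$ and $\w(\alpha,\beta)=0$ for $a,b\in\sections{D_1}$ and $\alpha,\beta\in\sections{D_2}$ --- \eqref{fullBracket} reduces on $\sections{D_1\oplus D_2}$ to the matched-sum bracket $\big([a,b]_{D_1}+\lconn_\alpha b-\lconn_\beta a\big)\oplus\big([\alpha,\beta]_{D_2}+\rconn_a\beta-\rconn_b\alpha\big)$ while the anchor becomes $\rho_{D_1}+\rho_{D_2}$. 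Consequently $D_1\oplus 0$ and $0\oplus D_2$ are complementary Lie subalgebroids of the Lie algebroid $D_1\oplus D_2$, and the converse part of Mokri's theorem yields that $(D_1,D_2)$ is a matched pair of Lie algebroids, the two connections being the restrictions of $\nablaleft$ and $\nablaright$. (Alternatively, and equivalently, one can feed triples of sections of $\sections{D_1}$ and $\sections{D_2}$ into the identities \eqref{derofBr1}--\eqref{new4Y} of Theorem~\ref{glass}: once the $\oo$-, $\w$- and curvature-contributions are seen to vanish on the Dirac subbundles, \eqref{derofBr1} and \eqref{derofBr2} become Mokri's compatibilities \eqref{crocodile} and \eqref{alligator}, \eqref{curvCompat} gives flatness of the restricted connections, and \eqref{new4X}, \eqref{new4Y} are vacuous.)

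The step I expect to be the main obstacle is the vanishing of $\oo$ and $\w$ on the Dirac subbundles, where one must exploit not just the isotropy of $D_1$, $D_2$ but their \emph{maximal} isotropy, together with the matched-Dirac condition. A first move: \eqref{donkey} gives $\ip{\gamma}{\oo(a,b)}_2=\tfrac12\big(\ip{\lconn_\gamma a}{b}_1-\ip{a}{\lconn_\gamma b}_1\big)$, and for $a,b\in\sections{D_1}$ and $\gamma\in\sections{D_2}$ the right-hand side vanishes, because the matched-Dirac condition keeps $\lconn_\gamma a$ and $\lconn_\gamma b$ inside the isotropic subbundle $D_1$; hence $\oo(a,b)$ is $\ip{\cdot}{\cdot}_2$-orthogonal to $D_2$ and therefore lies in $\sections{D_2}$ by maximal isotropy. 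One then has to iterate this type of argument --- invoking also $\rconn_a\alpha\in\sections{D_2}$, the curvature compatibility \eqref{curvCompat}, and the cyclic identities \eqref{new4X}--\eqref{new4Y} --- to upgrade ``$\oo(a,b)\in\sections{D_2}$'' to ``$\oo(a,b)=0$'', and symmetrically for $\w$. Everything else is routine bookkeeping on the restricted structure.
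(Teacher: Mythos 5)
You have put your finger on exactly the right issue, but the step you defer to the end --- upgrading ``$\oo(a,b)\in\sections{D_2}$'' to ``$\oo(a,b)=0$'', and likewise for $\w$ --- is not a gap that maximal isotropy can close: the claim is false in general. Your computation via \eqref{donkey} uses the matched--Dirac condition to control $\lconn_\gamma$ only for $\gamma\in\sections{D_2}$, which shows precisely that $\oo(a,b)\perp D_2$, i.e.\ $\oo(a,b)\in\sections{D_2}$; to kill $\oo(a,b)$ you would need the same vanishing for \emph{all} $\gamma\in\sections{E_2}$, and nothing in the hypotheses gives that. The paper's own flat regular example (Section~\ref{ss:flatReg}) already realizes the obstruction: there $\oo(\fx_1\oplus\xi_1,\fx_2\oplus\xi_2)=\Curvature(\fx_1,\fx_2)$ by \eqref{monkey} and \eqref{FSGFx1x2}, and one can easily arrange a matched pair of Dirac structures with $\Curvature\neq 0$. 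For instance, on $M=\RR^3$ take $\GG=M\times\RR^{2,2}$ abelian, $D_2$ a constant Lagrangian plane, $\Curvature(\partial_x,\partial_y)$ a nonzero section of $D_2$, $H=0$, and $D_1$ spanned by $\partial_x,\partial_y,dz$; all the matched--Dirac conditions hold, yet $(a\oplus 0)\dia(b\oplus 0)=(a\dia_1 b)\oplus\oo(a,b)$ has a nonzero $D_2$-component. So $D_1\oplus 0$ is \emph{not} a Lie subalgebroid of the Dirac structure $D_1\oplus D_2$, and Mokri's converse theorem --- the engine of your main argument --- simply does not apply.

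Your parenthetical fallback fares no better. Restricting \eqref{derofBr1} to $a_1,a_2\in\sections{D_1}$ and $\alpha\in\sections{D_2}$ does kill the $\D$-terms by isotropy, but it leaves $-\w(\alpha,\oo(a_1,a_2))$ on the right-hand side, so Mokri's compatibility \eqref{crocodile} for the restricted data is \emph{equivalent} to the vanishing of $\w(\alpha,\oo(a_1,a_2))$ --- and this too fails in the example above once the connection $\nabla$ on $\GG$ is chosen so that $\conn_{\partial_z}$ does not preserve $D_2$ (note $\partial_z\notin\rho(D_1)$, so the matched--Dirac condition places no constraint on $\conn_{\partial_z}$): a direct computation gives a nonzero multiple of $dz$ for the left-hand side of \eqref{crocodile}. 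The upshot is that the missing lemma cannot be supplied: the corollary, read with the restricted brackets and restricted connections, needs the additional hypothesis $\oo|_{D_1\times D_1}=0$ and $\w|_{D_2\times D_2}=0$. That hypothesis does hold in the concrete examples of Section~\ref{s:mpDirac} (there either $\nablaleft$ vanishes identically, or $\rconn_c$ depends only on $\rho_1(c)$ and $\rho_1(D_1)=\rho_1(E_1)$), and under it your argument --- both the Mokri-converse route and the direct verification of \eqref{derofBr1}--\eqref{new4Y} --- goes through verbatim. As written, however, the proof has a genuine and, along these lines, unrepairable gap.
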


\begin{ex}  
Let  $CM:=TM\oplus T^*M$ be the standard Courant algebroid, $V\to M$
a vector bundle  with a flat connection $\nabla$.
Endowing  $V^*$ with the dual connection,
$CM$ and $V\oplus V^*$ are   matched pair of Courant algebroids.
\newline
Let  $\omega\in\Omega^2(M)$ and $L\in\sections{\wedge^2V^*}$.
$\Graph\omega\subset CM$ is a Dirac structure  in $CM$ iff
$\ud\omega=0$. On the other hand,
$\Graph L^\#\subset V\oplus V^*$ is automatically
a Dirac structure. Then $(\Graph\omega, \Graph L^\#)$ is a matched 
pair of Dirac structure iff
$[\conn_X,L^\#]=0$ for all $X\in\sections{TM}$.
In this case the direct sum Dirac structure 
is the  graph of  bundle map
\[ TM\oplus V \xto{\begin{pmatrix}\omega^\#&0\\ 0&L^\#\end{pmatrix}} T^*M\oplus V^* .\] 
On the other hand, we can consider the Dirac structure on
$CM$ given by the graph of  a Poisson bivector $\pi$ on $M$,
and the Dirac structure on $V\oplus V^*$ given by the graph of $\Lambda\in\sections{\wedge^2 V}$.
They form a matched  pair of Dirac structures if and only if $[\pi,\Lambda]_\oplus=0$
\end{ex}

%\appendix

\bibliography{grutzmann}

\providecommand{\bysame}{\leavevmode\hbox to3em{\hrulefill}\thinspace}
\providecommand{\MR}{\relax\ifhmode\unskip\space\fi MR }
% \MRhref is called by the amsart/book/proc definition of \MR.
\providecommand{\MRhref}[2]{%
  \href{http://www.ams.org/mathscinet-getitem?mr=#1}{#2}
}
\providecommand{\href}[2]{#2}
\begin{thebibliography}{10}

\bibitem{CSX09}
Zhuo Chen, Mathieu Sti{\'e}non, and Ping Xu, \emph{On regular {Courant}
  algebroids}, \texttt{arXiv:0909.0319} (2009).

\bibitem{Cour90}
Theodore~James Courant, \emph{Dirac manifolds}, Transactions of the American
  Mathematical Society \textbf{319} (1990), no.~2, 631--661.

\bibitem{Dor93}
Irene Dorfman, \emph{Dirac structures and integrability of nonlinear evolution
  equations}, Nonlinear Science: Theory and Applications, John Wiley \& Sons
  Ltd., Chichester, 1993.

\bibitem{Dor87}
Irene~Ya. Dorfman, \emph{Dirac structures of integrable evolution equations},
  Physics Letters. A \textbf{125} (1987), no.~5, 240--246.

\bibitem{Gua10}
Marco Gualtieri, \emph{Generalized {K\"ahler} geometry},
  \texttt{arXiv:1007.3485} (2010).

\bibitem{Huy05}
Daniel Huybrechts, \emph{Complex geometry}, Universitext, {Springer-Verlag},
  Berlin, 2005.

\bibitem{LSX08}
Camille {Laurent-Gengoux}, Mathieu Sti{\'e}non, and Ping Xu, \emph{Holomorphic
  poisson manifolds and holomorphic {Lie} algebroids}, International
  Mathematics Research Notices. {IMRN} (2008), Art. ID rnn 088, 46.

\bibitem{Xu97}
{Zhang-Ju} Liu, Alan Weinstein, and Ping Xu, \emph{Manin triples for {Lie}
  bialgebroids}, Journal of Differential Geometry \textbf{45} (1997), no.~3,
  547--574.

\bibitem{Lu97}
{Jiang-Hua} Lu, \emph{Poisson homogeneous spaces and {Lie} algebroids
  associated to {Poisson} actions}, Duke Mathematical Journal \textbf{86}
  (1997), no.~2, 261--304.

\bibitem{Mack92}
Kirill C.~H. Mackenzie, \emph{Double {Lie} algebroids and second-order
  geometry. {I}}, Advances in Mathematics \textbf{94} (1992), no.~2, 180--239.

\bibitem{Mer09}
Jochen Merker, \emph{On the geometric structure of {Hamiltonian} systems with
  ports}, Journal of Nonlinear Science \textbf{19} (2009), no.~6, 717--738.

\bibitem{Mor97}
Tahar Mokri, \emph{Matched pairs of {Lie} algebroids}, Glasgow Mathematical
  Journal \textbf{39} (1997), no.~2, 167--181.

\bibitem{Royt99}
Dmitry Roytenberg, \emph{Courant algebroids, derived brackets and even
  symplectic supermanifolds}, \texttt{arXiv:math/9910078} (1999).

\bibitem{Uchi02}
Kyousuke Uchino, \emph{Remarks on the definition of a {Courant} algebroid},
  Letters in Mathematical Physics \textbf{60} (2002), no.~2, 171--175.

\end{thebibliography}
\bibliographystyle{amsplain}

\end{document}